%% file: main.tex
\documentclass[11pt,letterpaper]{article}
\input{preamble}

\title{Learning Operators of Geometry with an Interface Autoencoder}
\author{
Aiqing Zhu\thanks{Department of Mathematics, National University of Singapore
(\href{mailto:zaq@nus.edu.sg}{\texttt{zaq@nus.edu.sg}}).}
\and
Pengzhan Jin\thanks{National Engineering Laboratory for Big Data Analysis and
Applications, Peking University
(\href{mailto:jpz@pku.edu.cn}{\texttt{jpz@pku.edu.cn}}).}
}
\date{}

\begin{document}
\maketitle

\begin{abstract}
\input{sec/abstract}
\end{abstract}

\noindent\textbf{Keywords:} Operator learning, geometry-dependent PDEs,
interface problems, autoencoders, topology-preserving decoding.

\medskip
\noindent\textbf{2020 Mathematics Subject Classification:}
35R37, 
41A25, 
41A65, 
68T07. 

\section{Introduction}
\input{sec/intro}

\section{Problem setup}
\label{sec:setup}
\input{sec/problem}

\section{Approximation theory}
\label{sec:approx}
\input{sec/approx}

\section{Interface Autoencoder}
\label{sec:method}
\input{sec/iae}

\section{Operator learning with the IAE and its approximation theory}
\label{sec:operator-approx}
\input{sec/operator}

\section{Proofs}
\label{sec:proofs}
This section proves Theorems~\ref{thm:mimo-codeapprox} and~\ref{thm:opapprox}.
\subsection{Proof of Theorem~\ref{thm:mimo-codeapprox}}
\label{sec:proof1}
\input{sec/proof1}
\subsection{Proof of Theorem~\ref{thm:opapprox}}
\label{sec:proof-opapprox}
\input{sec/proof2}

\section{Numerical experiments}
\label{sec:numerics}
\input{sec/numerical_example}

\section{Conclusion}
\label{sec:conclusion}
\input{sec/conclusion}

\clearpage
\bibliographystyle{plainnat}
\bibliography{sec/ref}

\clearpage
\appendix
\section*{Numerical implementation details}
\input{sec/numerical_details}

\end{document}

%% file: preamble.tex
\usepackage[T1]{fontenc}
\usepackage[utf8]{inputenc}
\usepackage{lmodern}
\usepackage[margin=1in]{geometry}
\usepackage{mathtools,amssymb,amsthm,bm}
\usepackage{graphicx,booktabs,caption}
\usepackage{microtype}
\usepackage{needspace}
\usepackage[numbers,sort&compress]{natbib}
\usepackage{xurl}
\usepackage[hidelinks]{hyperref}

\hypersetup{
  pdftitle={Learning Operators of Geometry with an Interface Autoencoder},
  pdfauthor={Aiqing Zhu and Pengzhan Jin}
}
\graphicspath{{figures/}}
\mathtoolsset{showonlyrefs=true}
\numberwithin{equation}{section}
\allowdisplaybreaks[1]
\AddToHook{cmd/section/before}{\Needspace{6\baselineskip}}

\theoremstyle{plain}
\newtheorem{theorem}{Theorem}[section]
\newtheorem{lemma}[theorem]{Lemma}
\newtheorem{corollary}[theorem]{Corollary}
\newtheorem{proposition}[theorem]{Proposition}
\newtheorem{assumption}{Assumption}

\theoremstyle{definition}

\theoremstyle{remark}

\newcommand{\R}{\mathbb{R}}
\newcommand{\N}{\mathbb{N}}
\newcommand{\Bx}{\mathcal{B}}
\newcommand{\norm}[1]{\lVert #1 \rVert}
\newcommand{\abs}[1]{\lvert #1 \rvert}
\newcommand{\inner}[2]{\langle #1,\, #2\rangle}
\newcommand{\relL}{\ensuremath{\mathrm{rel}\text{-}\ell^2}}

\DeclareMathOperator{\sdf}{sdf}
\DeclareMathOperator*{\argmin}{arg\,min}
\DeclareMathOperator{\dist}{dist}
\DeclareMathOperator{\Ext}{Ext}
\DeclareMathOperator{\reach}{reach}
\newcommand{\bi}{\bm{i}}
\newcommand{\bp}{\bm{p}}

%% file: sec/abstract.tex
Geometry-dependent PDEs define operators whose inputs and outputs may each consist of an oriented interface and a function on that interface, whereas most existing neural operators are formulated on fixed domains.
In this paper, we first establish an approximation theorem for continuous operators between general state sets.
We then instantiate the theorem with the Interface Autoencoder (IAE), which represents interface--function states in a fixed box using finite projection codes and decodes them by zero-set extraction and restriction.
For tensor cosine codes, we prove a uniform $C^0$ reconstruction rate and tube-local gradient convergence.
These estimates yield Hausdorff and graph-Hausdorff error bounds, together with a reference-based topology certificate.
The IAE accommodates both cross-space geometric operators, mapping domain geometries to the associated PDE solution fields, and within-space geometric operators, mapping between interface--function states.
Its effectiveness is demonstrated through numerical experiments on the Poisson equation, interface merging in Hele--Shaw flow, and two-phase Stokes flow with surfactant.

%% file: sec/intro.tex
Many scientific-computing tasks require repeated solutions of partial differential equations indexed by coefficients, data, or geometry.
Operator learning addresses this many-query setting by approximating solution maps, with existing methods largely developed for functions on fixed domains and supported by universal approximation theory \cite{Chen1995Universal,Gupta2021Multiwavelet,He2024MgNO,Jin2022MIONet,Kovachki2021On,Kovachki2023Neural,Li2021Fourier,Lu2021Learning}.
However, many problems of interest are geometry-dependent.
Examples include structural shape and topology optimization \cite{Allaire2004Structural,Bendsoe1988Generating} and fluid--structure interaction \cite{Donea1982Arbitrary,Peskin2002Immersed}.
Evolving geometries also arise in phase-change problems \cite{Karma1998Quantitative,Voller1987Fixed}, Hele--Shaw flows \cite{Hou1994Removing,Saffman1958Penetration}, multiphase flows with surfactant \cite{Stone1990Effects,Xu2006Level}, and PDEs posed on evolving surfaces \cite{Dziuk2007Finite,Dziuk2013Finite}.
In these settings, the domain or interface may itself be an input or output, so the states of the target solution operators may not belong to a common fixed function space.
We distinguish two cases in this paper: \emph{cross-space} operators map domain geometries to the corresponding PDE solution fields, whereas \emph{within-space} operators map one geometric state to another.
In either case, a geometric state may consist of an interface together with a function defined on it.

Existing approaches to variable-domain operator learning broadly follow two strategies: transporting data to a fixed reference domain or representing geometry directly in a common ambient domain. Geo-FNO and DIMON use learned or prescribed coordinate maps \cite{Li2023Fourier,Yin2024Scalable}; D2D and D2E use pullback or extension representations \cite{Xiao2026Deformation}; and FBNO predicts a diffeomorphism from a fixed reference domain together with a reference field \cite{Long2026Deep}. Such approaches become restrictive when the representation relies on a homeomorphic or diffeomorphic correspondence with a single reference domain, since that correspondence cannot be maintained across topology changes.
Ambient-domain methods avoid this restriction by encoding geometry directly, for example through a domain indicator in DAFNO \cite{Liu2023Domain}, signed-distance and point-cloud representations in GINO \cite{Li2023Geometry} and Geom-DeepONet \cite{He2024Geom}, point-cloud integration in PCNO \cite{Zeng2025PointCloud}, physics-attention in Transolver \cite{Luo2025TransolverPlus,Wu2024Transolver,zhou2026transolver}, or coordinate-based neural fields in CORAL \cite{Serrano2023Operator} and discretization-independent surrogates \cite{Duvall2025Discretization}. In most of these approaches, however, geometry is provided as part of the input representation, while the learned operator predicts fields on the prescribed geometry rather than geometric states themselves.
The Manifold Function Encoder (MFE) takes a step toward this broader setting by providing an injective ambient representation of manifold--function pairs through manifold-supported measures and finite moments \cite{Hu2025Manifold}. Its dual-norm reconstruction bound, however, does not by itself yield a stable decoder back to the physical manifold--function state, nor does it certify the topology of a reconstructed interface. Together, these limitations motivate two requirements: a general approximation principle for continuous operators between geometric state sets, and a stable physical decoder for jointly predicted interface--function states.

In this paper, we first establish a universal approximation theorem for continuous operators between general state sets, departing from existing results relying on Schauder bases of Banach spaces \cite{Chen1995Universal,Jin2022MIONet,Lu2021Learning}.
We then introduce the \emph{Interface Autoencoder} (IAE) for oriented interface--function states as a concrete instance of this approximation theorem.
The IAE combines an injective fixed-box representation based on the signed-distance field \cite{Osher1988Fronts} and the midpoint McShane--Whitney extension \cite{McShane1934Extension} with finite projection codes and a zero-set-based physical decoder.
For tensor cosine codes, classical Jackson--Lebesgue estimates provide a uniform $C^0$ reconstruction rate.
We further establish a quantitative local gradient error bound for the unfiltered tensor cosine projection of a signed distance function (Proposition~\ref{prop:approx}(ii)).
Under suitable geometric regularity, these estimates yield Hausdorff and graph-Hausdorff error bounds and a reference-based topology certificate ensuring that the decoded interface is a unique $C^1$ normal graph over the target.

To validate the effectiveness of the IAE, we perform several numerical experiments across two settings. For the cross-space case, the IAE shows superior representation capability on a variable-domain Poisson problem. For the within-space case, IAE-based operators successfully handle complex physical dynamics, including topological mergers in Hele--Shaw flow and coupled interface-surfactant states in two-phase Stokes flow.

%% file: sec/problem.tex
Fix $d\ge2$ and let $\Bx=[0,1]^d$.
We consider the state space
\begin{equation}\label{eq:ifspace}
\mathcal X:=\left\{(\Gamma,f):
\begin{aligned}
&\Gamma=\partial\Omega_\Gamma\text{ for a unique nonempty open set }
\Omega_\Gamma\Subset\operatorname{int}\Bx\\
&\text{with Lipschitz boundary},\qquad
f:\Gamma\to\R,\quad [f]_{\mathrm{Lip}(\Gamma)}<\infty
\end{aligned}
\right\}.
\end{equation}
The Lipschitz seminorm is computed using the Euclidean distance:
\begin{equation}\label{eq:lipseminorm}
[f]_{\mathrm{Lip}(\Gamma)}
:=\sup_{\substack{x,y\in\Gamma\ x\ne y}}
{\abs{f(x)-f(y)}}/{\norm{x-y}}.
\end{equation}
The set $\Omega_\Gamma$ specifies the interior phase and hence the orientation of the interface $\Gamma$.
No connectedness assumption is imposed on $\Omega_\Gamma$ or $\Gamma$.

The operators considered below admit $\mathcal X$ as a component of the input and/or output spaces, possibly with other variables.
A canonical within-space example is
\begin{equation}\label{eq:target}
\mathcal{G}:\mathcal{X}\longrightarrow\mathcal{X},\qquad
(\Gamma_0,f_0)\longmapsto(\Gamma_1,f_1),
\end{equation}
which is learned from samples $\{((\Gamma_0^s,f_0^s),(\Gamma_1^s,f_1^s))\}_{s=1}^S$ satisfying $(\Gamma_1^s,f_1^s)=\mathcal G(\Gamma_0^s,f_0^s)$.
More generally, the domain or codomain of $\mathcal G$ may contain additional product factors, and in a cross-space problem the interface-state factor $\mathcal X$ on one side may be replaced by a different state space, such as a space of full-domain fields.

%% file: sec/approx.tex
Existing operator-approximation results typically assume that the input and output spaces are Banach spaces equipped with Schauder bases.
To accommodate geometric state spaces, we instead establish a general approximation principle for continuous operators between arbitrary state sets.

Let $p,q$ be positive integers, and let $X_i$ and $Y_j$ be sets for $i=1,\ldots,p$ and $j=1,\ldots,q$.
We make the following assumptions:
\begin{assumption}\label{ass:embedding}
There are normed spaces and injective maps
\[
(Z_i^X,\norm{\cdot}_{Z_i^X}),\quad (Z_j^Y,\norm{\cdot}_{Z_j^Y}),
\qquad
\iota_i^X:X_i\to Z_i^X,
\quad
\iota_j^Y:Y_j\to Z_j^Y.
\]
Equip $X_i$ and $Y_j$ with the induced metrics
\[
d_i^X(x,x')=\norm{\iota_i^X(x)-\iota_i^X(x')}_{Z_i^X},
\qquad
d_j^Y(y,y')=\norm{\iota_j^Y(y)-\iota_j^Y(y')}_{Z_j^Y},
\]
and their products with the corresponding max metrics.
\end{assumption}

\begin{assumption}\label{ass:finite-reconstruction}
For $n,m\in\N$, there are $a_{i,n},b_{j,m}\in\N$ and encoding and reconstruction maps
\[
\begin{aligned}
\Phi_{i,n}^X: X_i\to\R^{a_{i,n}},\
\Psi_{i,n}^X: \R^{a_{i,n}}\to Z_i^X,\
\Phi_{j,m}^Y: Y_j\to\R^{b_{j,m}},\
\Psi_{j,m}^Y: \R^{b_{j,m}}\to Z_j^Y,
\end{aligned}
\]
such that $\Psi_{i,n}^X$ and $\Phi_{j,m}^Y$ are continuous, while $\Psi_{j,m}^Y$ is continuous at every point of $\Phi_{j,m}^Y(Y_j)$, for all $i,j,n,m$.
For every nonempty compact set $C\subset X_i$ and $D\subset Y_j$, the reconstructions satisfy
\begin{equation*}
\begin{aligned}
&\lim_{n\to\infty}\sup_{x_i\in C}
\norm{\Psi_{i,n}^X \circ \Phi_{i,n}^X(x_i)-\iota_i^X(x_i)}_{Z_i^X}=0,
\\
&\lim_{m\to\infty}\sup_{y_j\in D}
\norm{\Psi_{j,m}^Y \circ \Phi_{j,m}^Y(y_j)-\iota_j^Y(y_j)}_{Z_j^Y}=0.
\end{aligned}
\end{equation*}

\end{assumption}

\begin{theorem}
\label{thm:mimo-codeapprox}
Under Assumptions~\ref{ass:embedding}--\ref{ass:finite-reconstruction}, let $K_i\subset X_i$ be nonempty and compact, set $\bm K=\prod_{i=1}^p K_i$, and let
$
G=(G_1,\ldots,G_q):\bm K\to\prod_{j=1}^qY_j
$
be continuous.
For every $\varepsilon>0$, there exist $\bm n=(n_1,\ldots,n_p)$, $\bm m=(m_1,\ldots,m_q)$, and a continuous map
\begin{equation}\label{eq:mimo-synthop}
F_{\bm n,\bm m}:\prod_{i=1}^p\R^{a_{i,n_i}}
\longrightarrow\prod_{j=1}^q\R^{b_{j,m_j}},
\qquad
F_{\bm n,\bm m}=(F_{\bm n,\bm m}^{(1)},\ldots,F_{\bm n,\bm m}^{(q)}),
\end{equation}
such that, writing $\bm x=(x_1,\ldots,x_p)$,
\begin{equation}\label{eq:mimo-synthopapprox}
\sup_{\bm x\in\bm K}\max_{1\le j\le q}
\norm{
\Psi_{j,m_j}^Y  \circ F_{\bm n,\bm m}^{(j)}
\bigl((\Phi_{i,n_i}^X(x_i))_{i=1}^p\bigr)
-\iota_j^Y(G_j(\bm x))
}_{Z_j^Y}<\varepsilon.
\end{equation}
\end{theorem}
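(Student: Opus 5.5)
The idea is to reduce the problem to one output factor at a time and to build $F_{\bm n,\bm m}^{(j)}$ as the composition ``output encoder $\circ$ $G_j$ $\circ$ (approximate) input decoder''. Since the input reconstructions $\Psi_{i,n}^X\circ\Phi_{i,n}^X$ land in $Z_i^X$ rather than in $X_i$, the composition cannot be formed literally. I will instead use a continuous extension of $\Phi^Y_{j,m}\circ G_j\circ(\iota^X)^{-1}$ from the compact image $\iota^X(\bm K)$ to the whole normed space, together with a continuous retraction-type correction.

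\textbf{Step 1 (output side).} Let $D_j=G_j(\bm K)\subset Y_j$. This set is compact, since $\bm K$ is compact in the max metric and $G_j$ is continuous. By Assumption~\ref{ass:finite-reconstruction}, choose $m_j$ with $\sup_{y\in D_j}\norm{\Psi_{j,m_j}^Y\Phi_{j,m_j}^Y(y)-\iota_j^Y(y)}<\varepsilon/2$. The set $E_j=\Phi^Y_{j,m_j}(D_j)\subset\R^{b_{j,m_j}}$ is compact because $\Phi^Y$ is continuous. Since $\Psi^Y_{j,m_j}$ is continuous at each point of $E_j$, a standard compactness argument gives $\delta_j>0$ such that $\norm{c-e}<\delta_j$ with $e\in E_j$ implies $\norm{\Psi^Y(c)-\Psi^Y(e)}<\varepsilon/2$. (For each $e$, take a radius $r_e$ with oscillation $<\varepsilon/4$ on $B(e,2r_e)$, cover $E_j$ by finitely many balls $B(e,r_e)$, and set $\delta_j=\min r_e$.) It is therefore enough to produce codes within $\delta_j$ of $\Phi^Y_{j,m_j}(G_j(\bm x))$, uniformly over $\bm K$.

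\textbf{Step 2 (extension).} The map $h_j=\Phi^Y_{j,m_j}\circ G_j\circ(\iota^X)^{-1}$ is defined on the compact set $\iota^X(\bm K)=\prod_i\iota_i^X(K_i)\subset\prod_i Z_i^X$. It is continuous there because $\iota^X$ is an isometry onto its image by the definition of the induced metrics. Dugundji's extension theorem (or Tietze applied coordinatewise, since the target is $\R^{b}$ and the product normed space is metric) gives a continuous extension $H_j:\prod_iZ_i^X\to\R^{b_{j,m_j}}$. Because $\iota^X(\bm K)$ is compact, $H_j$ is uniformly continuous on a neighbourhood of it, in the following sense. There is $\eta>0$ such that $\norm{z-\iota^X(\bm x)}<\eta$ for some $\bm x\in\bm K$ implies $\norm{H_j(z)-H_j(\iota^X(\bm x))}<\delta_j$. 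This again follows from pointwise continuity of $H_j$ on the compact set, via the same finite-cover argument. Take $\eta$ to be the minimum over $j$.

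\textbf{Step 3 (input side and assembly).} Using Assumption~\ref{ass:finite-reconstruction} for each $K_i$, choose $n_i$ with $\sup_{x_i\in K_i}\norm{\Psi^X_{i,n_i}\Phi^X_{i,n_i}(x_i)-\iota^X_i(x_i)}<\eta$. Define
\[
F^{(j)}_{\bm n,\bm m}(c_1,\dots,c_p)=H_j\bigl(\Psi^X_{1,n_1}(c_1),\dots,\Psi^X_{p,n_p}(c_p)\bigr).
\]
This map is continuous since each $\Psi^X_{i,n}$ is continuous. For $\bm x\in\bm K$, the argument of $H_j$ lies within $\eta$ of $\iota^X(\bm x)$ in the max norm. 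Hence $F^{(j)}$ is within $\delta_j$ of $\Phi^Y_{j,m_j}(G_j(\bm x))\in E_j$. By Step 1, $\Psi^Y\circ F^{(j)}$ is within $\varepsilon/2$ of $\Psi^Y\Phi^Y G_j(\bm x)$, which in turn is within $\varepsilon/2$ of $\iota^Y_jG_j(\bm x)$. Making the inequalities strict with a margin yields \eqref{eq:mimo-synthopapprox}.

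The main obstacle is Step 2: extending a continuous map from a compact subset of a possibly non-complete, infinite-dimensional normed space while keeping the uniform neighbourhood control. Dugundji, or even Tietze on the metric space $\prod Z_i^X$, settles the extension. The uniform control must be argued through compactness, because $H_j$ need not be uniformly continuous globally.
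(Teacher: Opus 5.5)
Your proof is correct, and its skeleton matches the paper's proof. Both fix the output orders $m_j$ first. Both use compactness of $\Phi^Y_{j,m_j}(G_j(\bm K))$ to upgrade pointwise continuity of $\Psi^Y_{j,m_j}$ to a uniform code tolerance. Both build a continuous map on $\prod_i Z_i^X$ that reproduces the code map $\Phi^Y_{j,m_j}\circ G_j$ on the embedded inputs, use compactness again to get a uniform input-perturbation radius, and then choose $n_i$ and compose with the $\Psi^X_{i,n_i}$.

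The one genuine difference is the middle step.

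\begin{itemize}
\item You obtain the ambient map as an \emph{exact} continuous extension of $h_j$, via Tietze or Dugundji, from the set $\prod_i\iota_i^X(K_i)$, which is closed because it is compact.
\item The paper instead applies Lemma~2.9 of the cited MIONet work in the Banach completions of the $Z_i^X$. This yields a continuous $T$ that only \emph{approximates} $\mathcal H=(\Phi^Y_{j,m_j}\circ G_j)_j$ to within $\eta/3$ on the embedded inputs. The paper therefore has to split the code tolerance into an approximation part and a perturbation part, $\eta/3+\eta/3$.
\end{itemize}

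Your route is more elementary and self-contained. It needs no completions and no Banach-space approximation lemma, only the fact that $\prod_i Z_i^X$ is a metric space, and it removes one error term from the budget. The paper's route keeps the argument tied to the MIONet finite-cover construction it cites and builds on. However, the theorem as stated only asks for a continuous $F_{\bm n,\bm m}$, and your extension already delivers that.
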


The construction follows the finite-cover argument of \cite[Lemma~2.9 and Appendix~B]{Jin2022MIONet}. In contrast to Theorem~2.5 of that work, it does not rely on a Schauder basis or canonical projections.
The detailed proof is provided in Section~\ref{sec:proof1}.

Theorem~\ref{thm:mimo-codeapprox} also covers the existing approximation theory of operator learning \cite{Chen1995Universal,Jin2022MIONet,Lu2021Learning}, by viewing an operator \(G:f\mapsto g\) through its associated evaluation map
\[
\bar{G}: \ (f,z)\longmapsto G(f)(z).
\]
For Banach input spaces admitting Schauder bases, canonical truncation and synthesis satisfy the input reconstruction assumption on compact sets.
The query variable $z$ and scalar output use identity embeddings, encoders, and reconstructors.
Thus the theorem retains this familiar construction while allowing other prescribed representations and reconstruction maps.

The manifold function encoding (MFE) of \cite{Hu2025Manifold} provides a particular realization of the assumptions in Theorem~\ref{thm:mimo-codeapprox}.
For a geometric state $x=(M,f_M)$, where $M\subset V:=[0,1]^d$ is a compact $k$-dimensional Lipschitz manifold, $0\le k\le d$, and $f_M\in L^2(M,\mathcal H^k)$, its MFE representation is
\[
\iota_0(x)=(\mu_M,\mu_{M,f})\in ((C(V))')^2,\
\mu_M(v)=\int_Mv\,d\mathcal H^k,\
\mu_{M,f}(v)=\int_Mf_Mv\,d\mathcal H^k.
\]
MFE then constructs finite-dimensional encodings by evaluating these distributions using test functions:
\[
\begin{aligned}
\Phi_n^{\mathrm{MFE}}(M,f_M)
&=\left(
\left(\int_M\phi_{n,\ell}\,d\mathcal H^k\right)_{\ell=1}^{a_n},
\left(\int_Mf_M\phi_{n,\ell}\,d\mathcal H^k\right)_{\ell=1}^{a_n}
\right).
\end{aligned}
\]
Here $\{\phi_{n,\ell}\}_{\ell=1}^{a_n}$ are linearly independent basis functions in $C(V)$.
Under an approximation assumption on the chosen test-function spaces, these moments admit a uniform reconstruction in the dual space.
Thus, the MFE provides the normed-space embedding and finite-dimensional encoding scheme required by Theorem~\ref{thm:mimo-codeapprox}.
However, its reconstruction does not provide, in general, a stable inverse from the embedded representation back to the original geometric state $(M,f_M)$.

%% file: sec/iae.tex
The Interface Autoencoder (IAE) developed below provides another realization of the approximation framework in Theorem~\ref{thm:mimo-codeapprox}, while additionally enabling stable decoding back to the original interface--function state space.
We then construct the encoder decoder pair in this section.

For $(\Gamma,f)\in\mathcal X$, we first extend the geometry and the interface function to the fixed domain $\Bx$.
Define the signed distance function $u_\Gamma=\sdf_\Gamma:\Bx\to\R$ by
\begin{equation}\label{eq:sdf}
\sdf_\Gamma(\xi)=
\begin{cases}
-\dist(\xi,\Gamma),&\xi\in\Omega_\Gamma,\\
0,&\xi\in\Gamma,\\
\dist(\xi,\Gamma),&\xi\in\Bx\setminus\overline{\Omega_\Gamma},
\end{cases}
\qquad
\dist(\xi,\Gamma)=\inf_{y\in\Gamma}\norm{\xi-y}.
\end{equation}
Then $\Gamma=u_\Gamma^{-1}(0)$ and $[u_\Gamma]_{\mathrm{Lip}(\Bx)}\le1$.

Let $L_f=[f]_{\mathrm{Lip}(\Gamma)}$ and define $u_f=\Ext_{L_f}^{\mathrm{mid}}[f]$, where, for $L\ge L_f$,
\begin{equation}\label{eq:mcshane}
\Ext_L^{\mathrm{mid}}[f](\xi)=\tfrac12\Big(\inf_{y\in\Gamma}\big(f(y)+L\norm{\xi-y}\big)+\sup_{y\in\Gamma}\big(f(y)-L\norm{\xi-y}\big)\Big).
\end{equation}
The extension satisfies $\Ext_L^{\mathrm{mid}}[f]|_\Gamma=f$ and $[\Ext_L^{\mathrm{mid}}[f]]_{\mathrm{Lip}(\Bx)}\le L$~\cite{McShane1934Extension}.
Hence, the map $(\Gamma,f)\mapsto (u_\Gamma,u_f)$ provides an injective embedding of $\mathcal X$ into a normed function space on $\Bx$.

Let $\{\phi_m\}_{m\ge0}\subset C^1([0,1];\R)$ be a complete orthonormal family in $L^2(0,1)$.
Write $\N_0=\{0,1,\ldots\}$ and set $\phi_{\bi}(\xi)=\prod_{l=1}^d\phi_{i_l}(\xi_l)$ for $\bi\in\N_0^d$.
For $r\in\N$, set $I_r=\{0,\dots,r-1\}^d$ and $V_r=\operatorname{span}\{\phi_{\bi}:\bi\in I_r\}$, and define $\mathcal A_r:L^2(\Bx)\to\R^{r^d}$ and $\mathcal S_r:\R^{r^d}\to V_r$ by
\begin{equation}\label{eq:projexact}
\mathcal{A}_r u=(c_{\bi})_{\bi\in I_r},\quad c_{\bi}=\int_\Bx u\,\phi_{\bi}\,d\xi,\qquad
\mathcal{S}_r c=\sum_{\bi\in I_r}c_{\bi}\phi_{\bi},\qquad \mathcal{Q}_r u=\mathcal{S}_r\mathcal{A}_r u.
\end{equation}
Thus $\mathcal Q_r$ is the $L^2$-orthogonal projector onto $V_r$.
By completeness and Parseval's identity, $\mathcal Q_ru\to u$ in $L^2(\Bx)$.
Because the orthogonal projectors are uniformly bounded, this convergence is uniform on compact subsets of $L^2(\Bx)$, as required by Theorem~\ref{thm:mimo-codeapprox} when the ambient norm is $L^2$.

The interface encoder is defined by applying $\mathcal A_r$ to the two extended fields:
\begin{equation}\label{eq:encoder}
\mathcal E_r:\mathcal X\longrightarrow\R^{2r^d},\qquad
\mathcal E_r(\Gamma,f)=\big(z_{\mathrm{geom}},z_f\big)
:=\big(\mathcal{A}_r u_\Gamma,\mathcal{A}_r u_f\big).
\end{equation}
For $z=(z_{\mathrm{geom}},z_f)$, synthesis reconstructs the two fields on $\Bx$ and then the interface pair:
\begin{equation}\label{eq:decode}
\begin{gathered}
(\widetilde u_\Gamma,\widetilde u_f)
=\big(\mathcal S_r z_{\mathrm{geom}},\mathcal S_r z_f\big), \qquad
\widetilde\Gamma=\{\xi\in\Bx:\widetilde u_\Gamma(\xi)=0\},\qquad
\widetilde f=\widetilde u_f|_{\widetilde\Gamma}.
\end{gathered}
\end{equation}
Not every code yields an admissible interface state.
Define
\[
\mathcal Z_r^{\mathrm{adm}}
:=\left\{z=(z_{\mathrm{geom}},z_f)\in\R^{2r^d}:
(\widetilde\Gamma,\widetilde f)\in\mathcal X,
\quad \Omega_{\widetilde\Gamma}=\{\widetilde u_\Gamma<0\}
\right\}.
\]
The interface decoder is the partial map
\begin{equation}\label{eq:decoder}
\mathcal D_r:\mathcal Z_r^{\mathrm{adm}}\longrightarrow\mathcal X,\qquad
\big(z_{\mathrm{geom}},z_f\big)\longmapsto\big(\widetilde\Gamma,\widetilde f\big).
\end{equation}
We call the resulting encoder decoder pair the Interface Autoencoder (IAE).

The tensor-product cosine basis used below is
\begin{equation}\label{eq:cosbasis}
\phi_0^{\mathrm{cos}}(x)=1,\quad
\phi_m^{\mathrm{cos}}(x)=\sqrt2\cos(\pi mx)\quad(m\ge1),\quad
\phi_{\bi}^{{\mathrm{cos}}}(\xi)=\prod_{l=1}^d\phi_{i_l}^{{\mathrm{cos}}}(\xi_l).
\end{equation}
This choice is motivated by both the implementation and the analysis.
First, the tensor-product cosine representation admits efficient coefficient transforms through the discrete cosine transform (DCT), substantially reducing the cost of encoding and reconstruction~\cite{Shen2011Spectral}.
Second, its regularity and spectral approximation properties provide the estimates required by the reconstruction theory developed below.

DeepSDF and SIREN have been used to approximate signed-distance fields with neural networks \cite{Park2019DeepSDF,Sitzmann2020Implicit}.
The IAE represents geometry by the projection coefficients of its signed-distance field in a prescribed orthonormal basis.
The resulting codes provide coordinates for learning maps between geometric states, separating the construction of the representation from the learning of the operator.

%% file: sec/operator.tex
We then apply the IAE to operator learning between geometric state spaces.

Equip $C(\Bx)^2$ with the max norm and define
\[
\iota:\mathcal X\longrightarrow C(\Bx)^2,\qquad
\iota(\Gamma,f)=(u_\Gamma,u_f),
\qquad
\mathcal R(u,v):=\bigl(u^{-1}(0),v|_{u^{-1}(0)}\bigr).
\]
The partial map $\mathcal R$ is defined whenever its output belongs to $\mathcal X$ and has interior $\{u<0\}$.
Equations~\eqref{eq:sdf} and \eqref{eq:mcshane} imply $\mathcal R\circ\iota=\operatorname{Id}_{\mathcal X}$, so $\iota$ is injective.
Let $K\subset\mathcal X$ be nonempty and compact under the pullback metric induced by $\iota$, and suppose that $\mathcal G|_K$ is continuous.
Then
\[
\overline{\mathcal G}:=\iota\circ\mathcal G:K\longrightarrow C(\Bx)^2,
\qquad
\mathcal G|_K=\mathcal R\circ\overline{\mathcal G}.
\]
The finite codec satisfies $\mathcal E_r=(\mathcal A_r\times\mathcal A_r)\circ\iota$ and $\mathcal D_r=\mathcal R\circ(\mathcal S_r\times\mathcal S_r)$ on $\mathcal Z_r^{\mathrm{adm}}$.
Proposition~\ref{prop:approx}(i) provides uniform reconstruction, which verifies \textup{(\ref{ass:finite-reconstruction})}.
Applying Theorem~\ref{thm:mimo-codeapprox} with $p=q=1$ yields the following corollary.

\begin{corollary}\label{cor:iae-code}
Let $K\subset\mathcal X$ be nonempty and compact in the pullback metric induced by $\iota$, let $\mathcal G:K\to\mathcal X$ be continuous with respect to the pullback metrics, and assume $[f]_{\mathrm{Lip}(\Gamma)}\le\Lambda$ for every $(\Gamma,f)\in K\cup\mathcal G(K)$.
Equip the IAE with the tensor cosine codec.
Then for every $\varepsilon>0$ there exist orders $r_{\mathrm{in}},r_{\mathrm{out}}\ge2$ and a continuous map $F:\R^{2r_{\mathrm{in}}^d}\to\R^{2r_{\mathrm{out}}^d}$ such that the synthesized fields approximate the embedded target uniformly,
\begin{equation}\label{eq:iae-code}
\sup_{x\in K}\bigl\|(\mathcal S_{r_{\mathrm{out}}}\!\times\mathcal S_{r_{\mathrm{out}}})\,
 F(\mathcal E_{r_{\mathrm{in}}}(x))-\iota(\mathcal G(x))\bigr\|_{C(\Bx)^2}<\varepsilon.
\end{equation}
\end{corollary}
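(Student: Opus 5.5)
The plan is to apply Theorem~\ref{thm:mimo-codeapprox} with $p=q=1$ and $X_1=K$, $Y_1=\mathcal G(K)$ (or the set $\mathcal X_\Lambda$ of all states with $[f]_{\mathrm{Lip}(\Gamma)}\le\Lambda$). Both carry the embedding $\iota$ into $Z^X=Z^Y=C(\Bx)^2$ with the max norm. The encoders are $\Phi_r=(\mathcal A_r\times\mathcal A_r)\circ\iota=\mathcal E_r$ and the reconstructors are $\Psi_r=\mathcal S_r\times\mathcal S_r$. With these choices, \eqref{eq:mimo-synthopapprox} is literally \eqref{eq:iae-code}. It therefore remains only to check Assumptions~\ref{ass:embedding} and~\ref{ass:finite-reconstruction} and the continuity of $G:=\mathcal G|_K$.

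\textbf{Assumption~\ref{ass:embedding}.} We have $\iota(\Gamma,f)=(u_\Gamma,u_f)\in C(\Bx)^2$, since both fields are Lipschitz on $\Bx$. The map is injective because $\mathcal R\circ\iota=\operatorname{Id}_{\mathcal X}$. The pullback metrics are exactly those in the hypotheses of the corollary, so $G$ is continuous, and $K$ is compact by assumption.

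\textbf{Continuity requirements.} The map $\Psi_r$ is linear on a finite-dimensional space, hence continuous everywhere. The output encoder $\Phi_r^Y=\mathcal E_r$ must be continuous from $(Y_1,d^Y)$ to $\R^{2r^d}$. For this I use $|c_{\bi}(u)-c_{\bi}(v)|\le\|u-v\|_{L^2}\le\|u-v\|_{C(\Bx)}$, since $|\Bx|=1$ and $\|\phi_{\bi}\|_{L^2}=1$. So $\mathcal A_r\circ\iota$ is in fact Lipschitz in the pullback metric.

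\textbf{Uniform reconstruction (main step).} I need
\[
\sup_{x\in C}\|(\mathcal S_r\times\mathcal S_r)\mathcal E_r(x)-\iota(x)\|_{C(\Bx)^2}\to0
\]
for compact $C$, and this must hold in the \emph{sup norm}, not just in $L^2$. This is what Proposition~\ref{prop:approx}(i) supplies for the tensor cosine basis, which I take as given. For a Lipschitz function with constant $L$, the Jackson estimate combined with the Lebesgue constant of the tensor cosine projection gives a bound of the form $\|\mathcal Q_ru-u\|_{C(\Bx)}\lesssim L\,r^{-1}(\log r)^d$, which tends to $0$.

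The bound applies uniformly because every $u_\Gamma$ has $[u_\Gamma]_{\mathrm{Lip}(\Bx)}\le1$, and every $u_f$ has $[u_f]_{\mathrm{Lip}(\Bx)}\le\Lambda$ by the McShane--Whitney property. This is precisely where the hypothesis $[f]_{\mathrm{Lip}(\Gamma)}\le\Lambda$ on $K\cup\mathcal G(K)$ enters. The rate is therefore uniform over all of $K$ and $\mathcal G(K)$, not merely over compact subsets, and Assumption~\ref{ass:finite-reconstruction} follows.

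The only real obstacle is this $C^0$ convergence. Were Proposition~\ref{prop:approx}(i) not available, a fallback argument exists. Lipschitz functions on $\Bx$ with a bounded constant and bounded sup norm form an equicontinuous, hence compact, family in $C(\Bx)$. Uniform convergence on that family would then reduce to the pointwise convergence $\mathcal Q_ru\to u$ in $C(\Bx)$ together with the boundedness of the operator norms of $\mathcal Q_r$ on this class. Since both of those require the cosine-specific estimates anyway, I will simply invoke the proposition.

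\textbf{Conclusion.} Theorem~\ref{thm:mimo-codeapprox} now yields $r_{\mathrm{in}}=n_1$, $r_{\mathrm{out}}=m_1$, and a continuous $F$ satisfying \eqref{eq:iae-code}. The requirement $r\ge2$ is obtained by enlarging $r$ if necessary, since the reconstruction errors decrease along the chosen sequence and the theorem's construction permits taking the orders large.
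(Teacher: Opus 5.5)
Your proposal is correct and follows the paper's argument. Like you, the paper writes $\mathcal E_r=(\mathcal A_r\times\mathcal A_r)\circ\iota$ with reconstructor $\mathcal S_r\times\mathcal S_r$ into $C(\Bx)^2$, verifies Assumption~\ref{ass:finite-reconstruction} via Proposition~\ref{prop:approx}(i) (where the Lipschitz bounds $1$ and $\Lambda$ enter), and applies Theorem~\ref{thm:mimo-codeapprox} with $p=q=1$. Your explicit Lipschitz estimate for $\mathcal E_r$ in the pullback metric supplies the continuity of the output encoder that the paper leaves implicit, and $r_{\mathrm{in}},r_{\mathrm{out}}\ge2$ holds because every sufficiently large order meets the thresholds in the theorem's proof, not because the sup-norm errors decrease monotonically (they need not).
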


Corollary~\ref{cor:iae-code} controls the ambient max-norm error of the reconstructed signed-distance and surface-extension fields.
To translate this ambient accuracy into errors between the corresponding interface--function states, we first introduce the relevant metrics.
For nonempty compact subsets $K,L$ of a metric space $(M,\mathsf d)$, let
\begin{equation}\label{eq:hausdorff}
d_H^{\mathsf d}(K,L):=\max\!\left\{
\sup_{x\in K}\inf_{y\in L}\mathsf d(x,y),
\sup_{y\in L}\inf_{x\in K}\mathsf d(x,y)\right\}.
\end{equation}
We write $d_H$ when $\mathsf d$ is the Euclidean metric and, for nonempty $A,B\subset\R^d$, set $\dist(A,B):=\inf_{a\in A,\,b\in B}\norm{a-b}$.
On $\Bx\times\R$, let
\[
d_\times((x,s),(y,t))=\max\{\norm{x-y},\abs{s-t}\},
\qquad d_H^\times:=d_H^{d_\times},
\]
and, for $g:N\to\R$, let $\operatorname{gr}_N g:=\{(x,g(x)):x\in N\}$.
Every state has a compact graph, and the map $(\Gamma,f)\mapsto\operatorname{gr}_\Gamma f$ is injective.
Hence, for states $(\Gamma,f)$ and $(\Gamma',f')$,
\begin{equation}\label{eq:graphmetric}
d_{\mathrm{gr}}\big((\Gamma,f),(\Gamma',f')\big)
:=d_H^\times\big(\operatorname{gr}_\Gamma f,\operatorname{gr}_{\Gamma'} f'\big)
\end{equation}
defines a metric on $\mathcal X$.
In particular, $d_{\mathrm{gr}}((\Gamma,0),(\Gamma',0))=d_H(\Gamma,\Gamma')$. For an interface $\Gamma$, we define its reach by
\begin{equation*}
\reach(\Gamma):=\sup\left\{s\ge0: 
\forall\,\xi\in\R^d,\ \dist(\xi,\Gamma)<s\ \Longrightarrow 
\exists!\,y\in\Gamma,\norm{\xi-y}=\dist(\xi,\Gamma) 
\right\},
\end{equation*}
and the corresponding nearest-point projection by
\begin{equation*}
\pi_\Gamma:\{\xi\in\R^d:\dist(\xi,\Gamma)<\reach(\Gamma)\}\longrightarrow\Gamma,\
\pi_\Gamma(\xi):=\argmin_{y\in\Gamma}\norm{\xi-y}.
\end{equation*}

With these definitions, we can now state the main result of this paper, which  translates ambient field accuracy into the original interface--function topology: Hausdorff, graph-Hausdorff, and transported $L^p$ error bounds for the decoded interface--function states, together with a topology certificate.
Its proof, including the required decoding and cosine estimates, is given in Section~\ref{sec:proof-opapprox}.

\begin{theorem}\label{thm:opapprox}
Let $K\subset\mathcal X$ be nonempty and compact in the pullback metric induced by $\iota$, and let $\mathcal G:K\to\mathcal X$ be continuous with respect to the pullback metrics.
Assume that there are constants $\Lambda<\infty$ and $a>0$ such that:
\begin{enumerate}
\item[(H1)] $[f]_{\mathrm{Lip}(\Gamma)}\le\Lambda$ for every state $(\Gamma,f)\in K\cup\mathcal G(K)$;
\item[(H2)] for every $(\Gamma_1,f_1)\in\mathcal G(K)$, the interface $\Gamma_1$ is of class $C^2$, with $\reach(\Gamma_1)\ge a$ and $\dist(\Gamma_1,\partial\Bx)\ge a$.
\end{enumerate}
Equip the IAE with the tensor cosine codec \eqref{eq:cosbasis}.
Then, for every $\varepsilon\in(0,a)$, there exist orders $r_{\mathrm{in}},r_{\mathrm{out}}\ge2$, and a continuous map $F:\R^{2r_{\mathrm{in}}^d}\to\R^{2r_{\mathrm{out}}^d}$ such that the operator $\widehat{\mathcal G}:=\mathcal D_{r_{\mathrm{out}}}\circ F\circ \mathcal E_{r_{\mathrm{in}}}$ satisfies $ \widehat{\mathcal G}(K)\subset \mathcal X$, and for every $x\in K$, writing $\mathcal G(x)=(\Gamma_1,f_1)$ and $\widehat{\mathcal G}(x)=(\widetilde\Gamma_1,\widetilde f_1)$:
\begin{enumerate}
\item\label{it:op-topo} There is a unique $h\in C^1(\Gamma_1)$ with $\norm{h}_{L^\infty(\Gamma_1)}<\varepsilon$ such that $\widetilde\Gamma_1=\{y+h(y)\nu_{\Gamma_1}(y):y\in\Gamma_1\}$, where $\nu_{\Gamma_1}$ is the outward unit normal of $\Omega_{\Gamma_1}$; in particular, $\pi_{\Gamma_1}|_{\widetilde\Gamma_1}$ is a $C^1$ diffeomorphism, so $\widetilde\Gamma_1$ is $C^1$-diffeomorphic to $\Gamma_1$.
\item\label{it:op-acc} With $q=(\pi_{\Gamma_1}|_{\widetilde\Gamma_1})^{-1}$, for every $p\in[1,\infty)$,
\[
d_H(\widetilde\Gamma_1,\Gamma_1)\le\varepsilon,
\quad
d_{\mathrm{gr}}\bigl(\widehat{\mathcal G}(x),\mathcal G(x)\bigr)
\le\varepsilon,
\quad
\norm{\widetilde f_1\circ q-f_1}_{L^p(\Gamma_1)}
\le\abs{\Gamma_1}^{1/p}\varepsilon.
\]
\end{enumerate}
Moreover, there is $\rho_0>0$ such that every continuous $F':\R^{2r_{\mathrm{in}}^d}\to\R^{2r_{\mathrm{out}}^d}$ with $\sup_{z\in\mathcal E_{r_{\mathrm{in}}}(K)}\norm{F'(z)-F(z)}_2<\rho_0$ satisfies the same conclusions.
\end{theorem}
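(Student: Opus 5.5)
The plan is to combine Corollary~\ref{cor:iae-code}, which gives $C^0$ control of the synthesized fields, with a quantitative \emph{gradient} estimate near the target interface. Only the gradient estimate makes the zero set a normal graph. The main difficulty is that $C^0$ closeness of $\widetilde u=\mathcal S_{r_{\mathrm{out}}}z_{\mathrm{geom}}$ to $u_{\Gamma_1}$ alone does not exclude spurious zeros or folds.

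I would therefore not use the approximation argument abstractly. Instead I would take $F=(\mathcal A_{r_{\mathrm{out}}}\times\mathcal A_{r_{\mathrm{out}}})\circ \Theta$, where $\Theta$ is the continuous code-level map from the proof of Theorem~\ref{thm:mimo-codeapprox}. Then the synthesized output splits as
\[
\mathcal S_{r_{\mathrm{out}}}F(z)-\iota\mathcal G(x)
=\bigl(\mathcal Q_{r_{\mathrm{out}}}\iota\mathcal G(x)-\iota\mathcal G(x)\bigr)
+\mathcal S_{r_{\mathrm{out}}}\bigl(F(z)-\mathcal A_{r_{\mathrm{out}}}\iota\mathcal G(x)\bigr).
\]
The second term lies in the finite-dimensional space $V_{r_{\mathrm{out}}}$, where all norms (including $C^1$) are equivalent with constants depending on $r_{\mathrm{out}}$. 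So once $r_{\mathrm{out}}$ is fixed, making the code error small (by choosing $r_{\mathrm{in}}$ and the finite cover) controls this term in $C^1(\Bx)$. The same observation yields $\rho_0$ for perturbations $F'$, and a strict margin in all inequalities gives robustness.

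For the first term I use Proposition~\ref{prop:approx}. Part (i) gives uniform $C^0$ convergence of $\mathcal Q_r u$ on $\Bx$, uniformly on the compact set $\iota\mathcal G(K)$. Part (ii) gives tube-local gradient convergence: on the tube $T=\{\dist(\cdot,\Gamma_1)<a/2\}$, which lies at distance $\ge a/2$ from $\partial\Bx$ by (H2), we have $\nabla\mathcal Q_r u_{\Gamma_1}\to\nabla u_{\Gamma_1}$ uniformly. Uniformity over $\mathcal G(K)$ comes from the uniform reach bound in (H2) together with compactness, and I expect this step to be the main technical obstacle. On $T$ the signed distance function is $C^{1,1}$ with $|\nabla u_{\Gamma_1}|=1$ and $\nabla u_{\Gamma_1}=\nu_{\Gamma_1}\circ\pi_{\Gamma_1}$. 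Choose $r_{\mathrm{out}}$ and the code accuracy so that $\|\widetilde u-u_{\Gamma_1}\|_{C^0(\Bx)}<\delta$ and $\|\nabla\widetilde u-\nabla u_{\Gamma_1}\|_{C^0(T)}<1/2$, with $\delta<\min(\varepsilon,a/2)$. The value field must also satisfy $\|\widetilde u_f-u_{f_1}\|_{C^0}<\delta'$.

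The topology certificate follows. Outside $T$ we have $|u_{\Gamma_1}|\ge a/2>\delta$, so $\widetilde u$ has no zeros there and its sign agrees with that of $u_{\Gamma_1}$. For each $y\in\Gamma_1$, consider $g_y(t)=\widetilde u(y+t\nu(y))$ for $|t|<a/2$. Its derivative satisfies $g_y'(t)=\nabla\widetilde u\cdot\nu(y)\ge 1-1/2>0$, since along the normal segment $\nabla u_{\Gamma_1}=\nu(y)$. Moreover $g_y(\pm\delta')$ has the signs of $\pm1$ for $\delta'$ slightly above $\delta$. Hence $g_y$ has a unique zero $h(y)$ with $|h(y)|<\varepsilon$. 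The implicit function theorem gives $h\in C^1$, since the map $(y,t)\mapsto y+t\nu(y)$ is a $C^1$ diffeomorphism of $\Gamma_1\times(-a,a)$ onto the tube ($\nu\in C^1$ because $\Gamma_1\in C^2$). Therefore $\widetilde\Gamma_1$ is the normal graph, $\pi_{\Gamma_1}|_{\widetilde\Gamma_1}$ is a $C^1$ diffeomorphism, and $\{\widetilde u<0\}$ is a bounded open set compactly contained in $\operatorname{int}\Bx$ whose boundary is this $C^1$ (hence Lipschitz) hypersurface. The trace $\widetilde f_1=\widetilde u_f|_{\widetilde\Gamma_1}$ is Lipschitz because $\widetilde u_f$ is smooth, so $\widehat{\mathcal G}(x)\in\mathcal X$.

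Accuracy then follows. We get $d_H\le\sup|h|<\varepsilon$. For the transported $L^p$ bound, write $|\widetilde f_1(q(y))-f_1(y)|\le|\widetilde u_f(q(y))-u_{f_1}(q(y))|+|u_{f_1}(q(y))-u_{f_1}(y)|\le\delta'+\Lambda|h(y)|$. Choosing $\delta'+\Lambda\delta\le\varepsilon$ bounds this pointwise by $\varepsilon$, which gives the $L^p$ estimate, and pairing $y$ with $q(y)$ gives $d_{\mathrm{gr}}\le\varepsilon$.
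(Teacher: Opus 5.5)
Your proposal is correct and follows essentially the paper's route. The shared steps are: splitting the reconstruction error into $\mathcal Q_rw-w$ plus a synthesized code error, controlled in $C^0$ and in tube-local $C^1$ by Proposition~\ref{prop:approx} and $\kappa_r,\kappa_{r,1}$ (your $\mathcal A_{r_{\mathrm{out}}}\circ\Theta$ plays the role of Corollary~\ref{cor:fixed-codeapprox}); strict monotonicity along normal fibers and the implicit function theorem for the normal graph; Lipschitz transport of the surface-function error; and $\rho_0$ from a strict margin.

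The differences are minor. You fix the tube radius at $a/2$ rather than $\varepsilon$. The uniformity over $\mathcal G(K)$ that you flag as the main obstacle is already built into Proposition~\ref{prop:approx}(ii), whose constant depends only on $a,\delta,d$; the real work is proving that proposition, which you take as given. You also check admissibility of $\{\widetilde u<0\}$ directly, whereas the paper builds an ambient $C^1$ diffeomorphism $\Phi$ with $\Phi(\Omega_\Gamma)=\{U<0\}$. That construction also transfers the uniqueness of the interior set required by the definition of $\mathcal X$, which your direct argument leaves implicit.
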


Theorem~\ref{thm:opapprox} concerns within-space operators, whose predictions must themselves be decoded into interface--function states.
For a cross-space operator whose output is a full-domain field, the physical decoding step and the geometric hypothesis (H2) are unnecessary, and Theorem~\ref{thm:mimo-codeapprox} alone provides the ambient guarantee.
Geometry-only outputs correspond to $f_1\equiv0$, for which the graph bound reduces to the Hausdorff bound.
By universal approximation, a neural network $F_\theta^{\mathrm{state}}:\R^{2r_{\mathrm{in}}^d}\to\R^{2r_{\mathrm{out}}^d}$ can be chosen to approximate $F$.
Whenever this uniform approximation error is smaller than $\rho_0$, the resulting predictor inherits the conclusions of Theorem~\ref{thm:opapprox}.
In practice, the network parameters $\theta$ are fitted from training data.
The predictor is defined by
\begin{equation}\label{eq:opmap}
\widehat{\mathcal G}_\theta :=\mathcal D_{r_{\mathrm{out}}}\circ F_\theta^{\mathrm{state}} \circ\mathcal E_{r_{\mathrm{in}}}.
\end{equation}

%% file: sec/proof1.tex
\begin{proof}

Fix $\varepsilon>0$.
We first choose $m_j$ such that, for every $j$,
\[
\sup_{y\in G_j(\bm K)}
\norm{\Psi_{j,m_j}^Y\Phi_{j,m_j}^Y(y)-\iota_j^Y(y)}_{Z_j^Y}
<\frac{\varepsilon}{3}.
\]
Set $\mathcal H_j=\Phi_{j,m_j}^Y\circ G_j$ and $\mathcal C_j=\mathcal H_j(\bm K)$.
Since each $\mathcal C_j$ is compact and $\Psi_{j,m_j}^Y$ is continuous at every point of $\mathcal C_j$, there exists a constant $\eta>0$ such that, for all $j$,
\begin{equation}\label{eq:decoder-local-continuity}
c\in\mathcal C_j,\quad \norm{e-c}_2<\eta
\quad\Longrightarrow\quad
\norm{\Psi_{j,m_j}^Y(e)-\Psi_{j,m_j}^Y(c)}_{Z_j^Y}
<\frac{\varepsilon}{3}
\end{equation}
Write $\mathcal H=(\mathcal H_1,\ldots,\mathcal H_q)$, taking values in $\prod_{j=1}^q\R^{b_{j,m_j}}$ equipped with the max norm.
Set $\widetilde K_i=\iota_i^X(K_i)$.
Since each $\iota_i^X$ is an isometry, the map $(\iota_i^X(x_i))_{i=1}^p\mapsto\mathcal H(\bm x)$ is continuous on $\prod_i\widetilde K_i$.
Applying \cite[Lemma~2.9]{Jin2022MIONet} in the Banach completions of $Z_i^X$, we obtain
a continuous map $T=(T_1,\ldots,T_q):\prod_i Z_i^X\to\prod_j\R^{b_{j,m_j}}$ such that
\begin{equation}\label{eq:tensor-on-embedded-inputs}
\sup_{\bm x\in\bm K}\max_j
\norm{T_j((\iota_i^X(x_i))_{i=1}^p)-\mathcal H_j(\bm x)}_2\leq\frac{\eta}{3}.
\end{equation}
{By the continuity of $T$ and the compactness of $\prod_i\iota_i^X(K_i)$,} we can choose $\delta_{\mathrm{rec}}>0$ such that for $\bm x\in\bm K$, $z_i\in Z_i^X$, $i=1,\ldots,p$, if $\max_i\norm{z_i-\iota_i^X(x_i)}_{Z_i^X}<\delta_{\mathrm{rec}}$, then
\begin{equation}\label{eq:tensor-input-perturbation}
\max_j\norm{T_j((z_i)_{i=1}^p)-T_j((\iota_i^X(x_i))_{i=1}^p)}_2
<\frac{\eta}{3}.
\end{equation}
Choose $n_i$ so that the $i$th input reconstruction error on $K_i$ is smaller than $\delta_{\mathrm{rec}}$, and define, on the entire product of input code spaces, $F_{\bm n,\bm m}((e_i)_{i=1}^p) =T((\Psi_{i,n_i}^X(e_i))_{i=1}^p)$.

This map is continuous.
By the estimates \eqref{eq:tensor-on-embedded-inputs} and \eqref{eq:tensor-input-perturbation}, we have
\[
\sup_{\bm x\in\bm K}\max_j
\norm{F_{\bm n,\bm m}^{(j)}
((\Phi_{i,n_i}^X(x_i))_{i=1}^p)-\mathcal H_j(\bm x)}_2\leq\frac{2\eta}{3}<\eta.
\]
Using \eqref{eq:decoder-local-continuity} and the chosen output reconstruction errors, we obtain
\begin{equation}\label{eq:final-estimate}
\begin{aligned}
&\norm{\Psi_{j,m_j}^YF_{\bm n,\bm m}^{(j)}
((\Phi_{i,n_i}^X(x_i))_{i=1}^p)-\iota_j^Y(G_j(\bm x))}_{Z_j^Y}\\
\leq&
\norm{\Psi_{j,m_j}^YF_{\bm n,\bm m}^{(j)}
((\Phi_{i,n_i}^X(x_i))_{i=1}^p)-
\Psi_{j,m_j}^Y\mathcal H_j(\bm x)}_{Z_j^Y} +
\norm{\Psi_{j,m_j}^Y\Phi_{j,m_j}^Y(G_j(\bm x))-\iota_j^Y(G_j(\bm x))}_{Z_j^Y}\\
<&\frac{2\varepsilon}{3}<\varepsilon.
\end{aligned}
\end{equation}
Taking the supremum over $\bm x$ and the maximum over $j$ concludes the proof.
\end{proof}

\begin{corollary}
\label{cor:fixed-codeapprox}
Under the hypotheses of Theorem~\ref{thm:mimo-codeapprox}, fix $\bm m=(m_1,\ldots,m_q)$.
For every $\eta>0$, there exist $\bm n=(n_1,\ldots,n_p)$ and a continuous map $F_{\bm n,\bm m}$ of the form \eqref{eq:mimo-synthop} such that
\begin{equation}\label{eq:fixed-codeapprox}
\sup_{\bm x\in\bm K}\max_{1\le j\le q}
\norm{F_{\bm n,\bm m}^{(j)}
\bigl((\Phi_{i,n_i}^X(x_i))_{i=1}^p\bigr)
-\Phi_{j,m_j}^Y(G_j(\bm x))}_2<\eta.
\end{equation}
\end{corollary}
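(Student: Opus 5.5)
This corollary comes out of the first half of the proof of Theorem~\ref{thm:mimo-codeapprox}, with the output orders now prescribed instead of chosen. The output reconstruction step and the decoder-continuity step \eqref{eq:decoder-local-continuity} are simply dropped, since the target is now the code $\Phi_{j,m_j}^Y(G_j(\bm x))$ itself rather than $\iota_j^Y(G_j(\bm x))$.

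\textbf{Setup.} Fix $\bm m$ and $\eta>0$, and set $\mathcal H_j=\Phi_{j,m_j}^Y\circ G_j:\bm K\to\R^{b_{j,m_j}}$. Each $\mathcal H_j$ is continuous, because $G_j$ is continuous and Assumption~\ref{ass:finite-reconstruction} makes $\Phi_{j,m_j}^Y$ continuous. Put $\widetilde K_i=\iota_i^X(K_i)$. Since $\iota_i^X$ is an isometry onto its image for the induced metric, it has a continuous inverse on $\widetilde K_i$. Hence $\widetilde{\mathcal H}:=\mathcal H\circ(\iota^X)^{-1}$ is a continuous map on the compact set $\prod_i\widetilde K_i$.

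\textbf{Extension and approximation.} Apply \cite[Lemma~2.9]{Jin2022MIONet} in the Banach completions of the $Z_i^X$, exactly as in the proof above. This gives a continuous $T=(T_1,\dots,T_q):\prod_iZ_i^X\to\prod_j\R^{b_{j,m_j}}$ with
\[
\sup_{\bm x\in\bm K}\max_j\norm{T_j((\iota_i^X(x_i))_i)-\mathcal H_j(\bm x)}_2\le\eta/3 .
\]
If one prefers to avoid that lemma, a Dugundji/Tietze extension of $\widetilde{\mathcal H}$ from the closed compact set $\prod_i\widetilde K_i$ to the whole product also works, and even gives error $0$.

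\textbf{Uniform continuity and choice of $\bm n$.} The key estimate needs uniform continuity of $T$ near a compact set. Suppose it failed. Then there would be points $\bm x^k\in\bm K$ and $z^k$ with $\max_i\norm{z_i^k-\iota_i^X(x_i^k)}\to0$ but $\norm{T(z^k)-T(\iota^X(\bm x^k))}\ge\eta/3$. By compactness a subsequence of $\bm x^k$ converges to some $\bm x$. Then $z^k\to\iota^X(\bm x)$ and $\iota^X(\bm x^k)\to\iota^X(\bm x)$, so continuity of $T$ at $\iota^X(\bm x)$ gives a contradiction. This yields $\delta_{\mathrm{rec}}>0$ such that perturbations of size less than $\delta_{\mathrm{rec}}$ change $T$ by less than $\eta/3$. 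By Assumption~\ref{ass:finite-reconstruction}, choose $n_i$ so that $\sup_{x_i\in K_i}\norm{\Psi_{i,n_i}^X\Phi_{i,n_i}^X(x_i)-\iota_i^X(x_i)}<\delta_{\mathrm{rec}}$.

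\textbf{Conclusion.} Define $F_{\bm n,\bm m}((e_i)_i)=T((\Psi_{i,n_i}^X(e_i))_i)$. It is continuous because $T$ and the $\Psi_{i,n_i}^X$ are. The triangle inequality then gives
\[
\sup_{\bm x\in\bm K}\max_j\norm{F^{(j)}_{\bm n,\bm m}((\Phi_{i,n_i}^X(x_i))_i)-\Phi_{j,m_j}^Y(G_j(\bm x))}_2\le 2\eta/3<\eta,
\]
which is \eqref{eq:fixed-codeapprox}.

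The only step that needs care is the uniform continuity of $T$ near the compact set, and the contradiction argument above settles it.
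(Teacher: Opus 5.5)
Your proposal is correct and follows the paper's proof: it reruns the argument of Theorem~\ref{thm:mimo-codeapprox} with $\bm m$ fixed and $\eta$ as the code-space tolerance, and drops the output-reconstruction and decoder-continuity steps exactly as the paper does. Your compactness argument for $\delta_{\mathrm{rec}}$ spells out a step the paper states in one line, and your Tietze remark is also valid (extend componentwise from the closed set $\prod_i\iota_i^X(K_i)$); it would make the $\eta/3$ approximation step exact.
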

\begin{proof}
Fix $\bm m$ and repeat the argument in the proof of Theorem~\ref{thm:mimo-codeapprox}, using the prescribed $\eta$ as the code-space tolerance.
The approximation of $\mathcal H$ and the input-reconstruction estimate conclude the proof.
No output reconstruction or decoder estimate is needed.
\end{proof}

%% file: sec/proof2.tex
\subsubsection{Decoding estimates}
\label{sec:regapprox}
Fix $(\Gamma,f)\in\mathcal X$ where the interface $\Gamma$ is of class $C^2$, and write
\begin{equation*}
L=[f]_{\mathrm{Lip}(\Gamma)},\qquad
u=\sdf_\Gamma,\qquad v=\Ext_L^{\mathrm{mid}}[f].
\end{equation*}
Choose $0<\delta<\min\{\reach(\Gamma),\dist(\Gamma,\partial\Bx)\}$ and set $N_\delta=\{\xi\in\Bx:\dist(\xi,\Gamma)<\delta\}$.
Let $\nu$ denote the outward unit normal of $\Omega_\Gamma$.
For $w\in\{u,v\}$, let $c_r^w=\mathcal A_rw$ and let $\widehat c_r^w\in\R^{r^d}$ be a predicted code.
Set
\begin{equation*}
\Delta c_r^w=\widehat c_r^w-c_r^w,\qquad
\widetilde w=\mathcal S_r\widehat c_r^w,
\end{equation*}
and define $\widetilde\Gamma=\widetilde u^{-1}(0)$ and $\widetilde f=\widetilde v|_{\widetilde\Gamma}$.
For a nonempty set $A\subseteq\Bx$ and a scalar- or vector-valued function $g$, write $\norm{g}_{\infty,A}=\sup_{\xi\in A}\norm{g(\xi)}$.
Define
\begin{equation}\label{eq:errdefs}
\begin{aligned}
E_r^w(A)&=\norm{\widetilde w-w}_{\infty,A},&
\tau_r^w(A)&=\norm{\mathcal Q_rw-w}_{\infty,A},\\
\kappa_r(A)&=\sup_{\xi\in A}
\Big(\sum_{\bi\in I_r}\phi_{\bi}(\xi)^2\Big)^{1/2},&
\eta_r^w(A)&=\tau_r^w(A)+\kappa_r(A)\norm{\Delta c_r^w}_2.
\end{aligned}
\end{equation}
Since $\widetilde w-w=(\mathcal Q_rw-w)+\mathcal S_r\Delta c_r^w$, the Cauchy--Schwarz inequality yields:
\begin{equation}\label{eq:fieldbound}
E_r^w(A)\le\eta_r^w(A).
\end{equation}
If the reconstruction basis is $C^1$, define
\begin{equation}\label{eq:c1gain}
\begin{aligned}
\gamma_r^u&=\norm{\nabla(\widetilde u-u)}_{\infty,N_\delta},&
\tau_{r,1}^u&=\norm{\nabla(\mathcal Q_ru-u)}_{\infty,N_\delta},\\
\kappa_{r,1}&=\sup_{\xi\in N_\delta}
\Big(\sum_{\bi\in I_r}\norm{\nabla\phi_{\bi}(\xi)}^2\Big)^{1/2},&
\eta_{r,1}^u&=\tau_{r,1}^u+
\kappa_{r,1}\norm{\Delta c_r^u}_2,
\end{aligned}
\end{equation}
and similarly we have $\gamma_r^u\le\eta_{r,1}^u$.

\begin{lemma}[Decoding certificate]\label{lem:decode-certificate}
If $\eta_r^u(\Bx)<\delta$, then $\widetilde\Gamma$ is a nonempty compact subset of $N_\delta$, $\pi_\Gamma(\widetilde\Gamma)=\Gamma$, and
\begin{align}
d_H(\widetilde\Gamma,\Gamma)
&\le\eta_r^u(N_\delta),\label{eq:cert-hausdorff}\\
d_H^\times\bigl(\operatorname{gr}_{\widetilde\Gamma}\widetilde f,
\operatorname{gr}_\Gamma f\bigr)
&\le\max\bigl\{\eta_r^u(N_\delta),
\eta_r^v(N_\delta)+L\eta_r^u(N_\delta)\bigr\}.
\label{eq:certgraph}
\end{align}
If the basis is $C^1$ and $\eta_{r,1}^u<1$, then there is a unique $h\in C^1(\Gamma)$ such that
\begin{equation*}
\widetilde\Gamma=\{x+h(x)\nu(x):x\in\Gamma\},\qquad
\norm{h}_{L^\infty(\Gamma)}\le\eta_r^u(N_\delta)<\delta.
\end{equation*}
In addition, $\pi_\Gamma|_{\widetilde\Gamma}$ is a $C^1$ diffeomorphism and, with $q=(\pi_\Gamma|_{\widetilde\Gamma})^{-1}$,
\begin{equation*}
\nabla\widetilde u(q(x))\cdot\nu(x)\ge1-\eta_{r,1}^u>0,
\qquad x\in\Gamma.
\end{equation*}
Moreover, for $1\le p\le\infty$,
\begin{equation}\label{eq:transportLp}
\norm{\widetilde f\circ q-f}_{L^p(\Gamma)}
\le\abs{\Gamma}^{1/p}
\bigl(\eta_r^v(N_\delta)+L\eta_r^u(N_\delta)\bigr),
\end{equation}
where $\abs{\Gamma}$ is the surface measure and $\abs{\Gamma}^{1/\infty}=1$.
\end{lemma}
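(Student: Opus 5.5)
The plan is to work entirely with the uniform field bound \eqref{eq:fieldbound} and the structure of the signed distance function inside the tubular neighbourhood $N_\delta$, where $\delta<\reach(\Gamma)$ gives $u(\xi)=\pm\dist(\xi,\Gamma)$, $u$ is $C^1$ (indeed $C^2$ since $\Gamma$ is $C^2$), $\nabla u=\nu\circ\pi_\Gamma$, and $u(x+t\nu(x))=t$ for $x\in\Gamma$, $\abs t<\delta$.

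\emph{Location and Hausdorff bound.} First, if $\eta:=\eta_r^u(\Bx)<\delta$, then every zero $\xi$ of $\widetilde u$ satisfies $\abs{u(\xi)}\le\eta<\delta$, so $\widetilde\Gamma\subset\{\abs u\le\eta\}\subset N_\delta$; compactness follows from continuity of $\widetilde u$ on $\Bx$. Hence $\sup_{\widetilde\Gamma}\dist(\cdot,\Gamma)\le\eta_r^u(N_\delta)$, since the field error can now be measured on $N_\delta$. For the converse direction and for $\pi_\Gamma(\widetilde\Gamma)=\Gamma$, fix $x\in\Gamma$ and consider the normal segment $t\mapsto x+t\nu(x)$, $t\in[-\delta',\delta']$ with $\eta_r^u(N_\delta)<\delta'<\delta$. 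At the endpoints, $u=\pm\delta'$ and $\abs{\widetilde u-u}\le\eta_r^u(N_\delta)<\delta'$, so $\widetilde u$ changes sign. The intermediate value theorem then gives a zero $x+t\nu(x)$ with $\abs t\le\eta_r^u(N_\delta)$, because outside that range the sign of $\widetilde u$ agrees with that of $u$. This zero projects to $x$. Thus nonemptiness, surjectivity of $\pi_\Gamma$, and \eqref{eq:cert-hausdorff} all follow.

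\emph{Graph bound.} For $\xi\in\widetilde\Gamma$, pair it with $x=\pi_\Gamma(\xi)$. Then $\norm{\xi-x}\le\eta_r^u$, and
\[
\abs{\widetilde v(\xi)-f(x)}\le\abs{\widetilde v(\xi)-v(\xi)}+\abs{v(\xi)-v(x)}\le\eta_r^v(N_\delta)+L\eta_r^u(N_\delta),
\]
using that $v$ is $L$-Lipschitz and $v|_\Gamma=f$. For $x\in\Gamma$, pair it with the zero on its normal segment found above, and the same estimate applies. Taking the max metric gives \eqref{eq:certgraph}.

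\emph{Normal-graph structure.} With $\eta_{r,1}^u<1$, along each normal segment $g_x(t)=\widetilde u(x+t\nu(x))$ we have
\[
g_x'(t)=\nabla\widetilde u\cdot\nu=\nabla u\cdot\nu+\nabla(\widetilde u-u)\cdot\nu\ge1-\eta_{r,1}^u>0
\]
on $\abs t<\delta$, since $\nabla u(x+t\nu)=\nu(x)$. So $g_x$ is strictly increasing and the zero is unique; call it $h(x)$. Since all of $\widetilde\Gamma$ lies in $N_\delta$ and each of its points lies on the normal segment of its projection, $\widetilde\Gamma$ is exactly the normal graph of $h$. Regularity of $h$ comes from the implicit function theorem applied to $G(x,t)=\widetilde u(x+t\nu(x))$, which is $C^1$ because $\nu\in C^1$ (as $\Gamma\in C^2$) and $\widetilde u\in C^1$, with $\partial_tG>0$. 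Then $q(x)=x+h(x)\nu(x)$ is a $C^1$ bijection onto $\widetilde\Gamma$ with inverse $\pi_\Gamma|_{\widetilde\Gamma}$. It is an immersion because its tangent map is the identity plus a normal component; equivalently, $\widetilde\Gamma$ is a $C^1$ hypersurface since $\nabla\widetilde u\ne0$ there, and $\pi_\Gamma$ is $C^1$ on the tube. This yields the diffeomorphism claim. Finally, \eqref{eq:transportLp} follows from the pointwise bound $\abs{\widetilde f(q(x))-f(x)}\le\eta_r^v+L\eta_r^u$ obtained in the graph step, integrated over $\Gamma$.

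\emph{Main obstacle.} The main delicacy is the $C^1$ regularity and diffeomorphism step. This needs care with the tube geometry: $\nu$ must be $C^1$, and $N_\delta$ must avoid $\partial\Bx$ so that $\widetilde u$ and the segments stay in $\operatorname{int}\Bx$. That is ensured by $\delta<\dist(\Gamma,\partial\Bx)$.
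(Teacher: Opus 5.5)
Your proposal is correct and follows essentially the paper's proof: the global sup-norm bound places $\widetilde\Gamma$ in $N_\delta$, the intermediate value theorem on normal fibers gives $\pi_\Gamma(\widetilde\Gamma)=\Gamma$ and the Hausdorff and graph bounds, strict monotonicity of $t\mapsto\widetilde u(x+t\nu(x))$ together with the implicit function theorem gives the $C^1$ normal graph, and $C^1$ regularity of the tubular projection gives the diffeomorphism. Your use of the endpoints $\pm\delta'$ rather than $\pm E_r^u(N_\delta)$ only removes the paper's separate case $E_r^u(N_\delta)=0$.

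One correction: $Dq(x)$ is not ``identity plus a normal component,'' because $h(x)\,D\nu(x)$ adds a tangential term, so the tangential block is $I+h(x)\,D\nu(x)$; it is still invertible since $\abs{h}<\delta<\reach(\Gamma)$. Your alternative justification, that the inverse $\pi_\Gamma|_{\widetilde\Gamma}$ is $C^1$ because $\pi_\Gamma$ is $C^1$ on the tube, is the paper's argument and suffices.
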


\begin{proof}
Denote $E_\Bx=E_r^u(\Bx)$ and $E_\delta=E_r^u(N_\delta)$.
By \eqref{eq:fieldbound}, for $\widetilde x\in\widetilde\Gamma$, we have
\begin{equation*}
\dist(\widetilde x,\Gamma)=\abs{u(\widetilde x)}
=\abs{u(\widetilde x)-\widetilde u(\widetilde x)}\le E_\Bx \le\eta_r^u(\Bx)<\delta,
\end{equation*}
so $\widetilde\Gamma\subset N_\delta$.
For each $x\in\Gamma$, if $E_\delta=0$ then $\widetilde u(x)=0$; otherwise, since
\begin{equation*}
\widetilde u(x+E_\delta\nu(x))\ge0,
\qquad
\widetilde u(x-E_\delta\nu(x))\le0,
\end{equation*}
the intermediate value theorem gives a zero on the normal fiber through $x$ at distance at most $E_\delta$.
This proves nonemptiness, $\pi_\Gamma(\widetilde\Gamma)=\Gamma$, and $d_H(\widetilde\Gamma,\Gamma)\le E_\delta$.
Compactness follows because $\widetilde\Gamma$ is closed in the compact box.
For $\widetilde x\in\widetilde\Gamma$, using the identity $v|_\Gamma=f$ and the bound $[v]_{\mathrm{Lip}(\Bx)}\le L$, we have
\begin{equation*}
\abs{\widetilde f(\widetilde x)-f(\pi_\Gamma\widetilde x)}
\le E_r^v(N_\delta)+L E_\delta.
\end{equation*}
Pairing each decoded point with its projection, and each $x\in\Gamma$ with the zero on its normal fiber constructed above, we conclude \eqref{eq:cert-hausdorff} and \eqref{eq:certgraph} by applying \eqref{eq:fieldbound}.

Now assume $\eta_{r,1}^u<1$.
For $x\in\Gamma$, taking $\varphi_x(t)=\widetilde u(x+t\nu(x))$, we have
\begin{equation*}
\varphi_x'(t)
=1+\nabla(\widetilde u-u)(x+t\nu(x))\cdot\nu(x)
\ge1-\gamma_r^u\ge1-\eta_{r,1}^u>0,
\end{equation*}
where we have used the fact that $u(x+t\nu(x))=t$ for $\abs{t}<\delta$.
Thus the zero on each normal fiber is unique; denote it by $x+h(x)\nu(x)$.
On the other hand, since every point of $\widetilde\Gamma$ lies in $N_\delta$, it admits a unique representation $\tilde x =x+s \nu (x)$ with $x\in\Gamma$.
The uniqueness of the zero on each normal fiber then yields $s=h(x)$, and hence $\widetilde\Gamma=\{x+h(x)\nu(x):x\in\Gamma\}$.
The implicit function theorem yields $h\in C^1(\Gamma)$.
Hence $ q(x)=x+h(x)\nu(x) $ is $C^1$.
By uniqueness of the zero on each normal fiber, $q$ is a bijection from $\Gamma$ onto $\widetilde\Gamma$, with inverse $ q^{-1}=\pi_\Gamma|_{\widetilde\Gamma}$.
Since the tubular projection $\pi_\Gamma$ is $C^1$, $q$ is a $C^1$ diffeomorphism.
The bounds on $h$ and $\nabla\widetilde u(q(x))\cdot\nu(x)$ follow from the construction.
Finally,
\begin{equation*}
\abs{\widetilde f(q(x))-f(x)}
\le E_r^v(N_\delta)+L\norm{q(x)-x}
\le\eta_r^v(N_\delta)+L\eta_r^u(N_\delta),
\end{equation*}
and taking the $L^p(\Gamma)$ norm we conclude \eqref{eq:transportLp}.
\end{proof}

\subsubsection{Cosine projection estimates}
The decoding lemma is basis independent. For the tensor cosine basis, the following proposition provides the estimates needed for the decoding lemma.
The $C^0$ bound in part~(i) follows from classical Jackson--Lebesgue theory.
The main contribution is the quantitative tube-local gradient estimate in part~(ii).
Its proof bounds contributions from distant singularities using the semiconcavity of the squared distance and the coordinate marginals of its positive-semidefinite Hessian defect measure.

\begin{proposition}[Cosine reconstruction estimates]\label{prop:approx}
Let $r\ge2$. For quantities defined in \eqref{eq:errdefs} and \eqref{eq:c1gain}, we have
\begin{enumerate}
\item[(i)] There is a constant $C_d\ge1$, depending only on $d$, such that for every Lipschitz function $w:\Bx\to\R$, 
\begin{equation}\label{eq:cosuniform}
\tau_r^w(\Bx)
\le C_d[w]_{\mathrm{Lip}(\Bx)}\frac{(1+\log r)^d}{r},
\qquad
\kappa_r(\Bx)=(2r-1)^{d/2}.
\end{equation}
\item[(ii)] Suppose that, for some $a>\delta$,
\begin{equation*}
\reach(\Gamma)\ge a,
\qquad
\dist(\Gamma,\partial\Bx)\ge a.
\end{equation*}
For fixed $d\ge2$, there is a constant $C_{a,\delta,d}$, independent of $\Gamma$, such that
\begin{equation}\label{eq:c1decay}
\begin{aligned}
\tau_{r,1}^u
&\le C_{a,\delta,d}\left\{
\frac{(1+\log(r+1))^d}{r}
+\frac{(1+\log(r+1))^{d-3/2}}{\sqrt r}
\right\},\\
\kappa_{r,1}
&\le\kappa_{r,1}^{\mathrm{box}}
:=\pi\sqrt{\frac{d\,r(r-1)}{3}}\,(2r-1)^{d/2}.
\end{aligned}
\end{equation}
\end{enumerate}
\end{proposition}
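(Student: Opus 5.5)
For part (i) I will handle the two identities and the estimate separately. The $\kappa_r$ identity is a direct computation. The tensor structure gives $\sum_{\bi\in I_r}\phi_{\bi}(\xi)^2=\prod_{l=1}^d\sum_{m=0}^{r-1}\phi_m^{\cos}(\xi_l)^2$, and each one-dimensional factor equals $1+2\sum_{m=1}^{r-1}\cos^2(\pi m x)$. This is at most $2r-1$, with equality at $x=0$. Taking the supremum over $\Bx$ and the square root gives $(2r-1)^{d/2}$.

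For the $C^0$ bound I will use the even reflection trick. Extend $w$ evenly in each coordinate to $[-1,1]^d$ and then $2$-periodically. The reflected function is still Lipschitz, with seminorm at most $[w]_{\mathrm{Lip}(\Bx)}$ in each coordinate (up to a dimensional constant for the Euclidean norm). Its cosine partial sum on $\Bx$ coincides with the square Fourier partial sum $S_{r-1}$ of the periodic extension.

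Lebesgue's lemma then gives $\|S_{r-1}g-g\|_\infty\le(1+\|D_{r-1}^{\otimes d}\|_{L^1})E_{r-1}(g)$. Here the tensor Dirichlet kernel has $L^1$ norm $O((1+\log r)^d)$, and Jackson's theorem for trigonometric polynomials of coordinate degree $r-1$ gives $E_{r-1}(g)\le C_d[w]_{\mathrm{Lip}}/r$. Multiplying the two factors yields \eqref{eq:cosuniform}.

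For part (ii), the bound on $\kappa_{r,1}$ is again a direct computation. We have $\norm{\nabla\phi_{\bi}}^2=\sum_l(\partial_l\phi_{i_l})^2\prod_{k\ne l}\phi_{i_k}^2$. The derivative factor satisfies $\sum_{m<r}(\sqrt2\pi m\sin\pi m x)^2\le2\pi^2\sum m^2=\pi^2 (r-1)r(2r-1)/3$, and the other factors are each at most $2r-1$. Summing over the $d$ coordinates gives $d\pi^2 r(r-1)(2r-1)^d/3$, which is the stated $\kappa_{r,1}^{\mathrm{box}}$.

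The real work is the bound on $\tau_{r,1}^u$. The starting point is that differentiation commutes with cosine projection up to a basis change: $\partial_l\mathcal Q_r u$ is the partial sum of $\partial_l u$ in the mixed basis (sine in coordinate $l$, cosine in the others). Equivalently, it is the square Fourier partial sum of the reflected periodic extension of $\partial_l u$, which is odd in $\xi_l$. So I must bound $|S_{r-1}(\partial_l\bar u)(\xi)-\partial_l u(\xi)|$ uniformly for $\xi\in N_\delta$. I will write this as a convolution with the tensor Dirichlet kernel and split the region with a smooth cutoff $\chi$ that equals $1$ on $N_{(a+\delta)/2}$ and vanishes outside $N_{a'}$, for some $a'<a$.

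\emph{Near part.} Since $\reach(\Gamma)\ge a$ and $\Gamma\in C^2$, the function $u$ is $C^{1,1}$ on $N_{a'}$, with Hessian bounded by $C/(a-a')$. The term $\chi\,\partial_l u$, after reflection, stays away from the reflection planes because $\dist(\Gamma,\partial\Bx)\ge a$. It is therefore Lipschitz with constant depending on $a,\delta,d$. The same Jackson--Lebesgue argument as in (i) then gives the rate $(1+\log r)^d/r$.

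\emph{Far part.} The term $(1-\chi)\partial_l u$ has singularities: the medial set, the kinks of $\partial_l u$, and the jumps created by the odd reflection at $\partial\Bx$. Since $\xi\in N_\delta$ is at distance at least $(a-\delta)/2$ from the support of this term, I will use the localization principle. Away from the diagonal, the one-dimensional Dirichlet kernel equals a bounded function times $\sin((r-\tfrac12)t)$. After one integration by parts in a chosen coordinate, the contribution is bounded by $r^{-1}$ times the total variation of $\partial_l[(1-\chi)u]$ along lines, multiplied by the $L^1$ norms of the Dirichlet kernels in the remaining coordinates.

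This is where semiconcavity enters. The function $\tfrac12|\xi|^2-\tfrac12\dist^2$ is convex, so $D^2u$ is a signed measure whose singular part has a sign. Its total mass is controlled by $a$, $d$ and the box alone, without any dependence on the particular $\Gamma$, because $\Delta(\dist^2)\le 2d$ and the boundary terms are bounded. I will pass from $u$ to $\dist^2$ in the outer region, where $u$ and $\dist$ are bounded away from zero.

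\emph{Main obstacle.} The real difficulty is that the mass of the defect measure only controls the variation in an averaged sense. The pointwise value at $\xi$ requires one-dimensional marginals along coordinate lines through $\xi$, where there is no integration to absorb the kernel. I expect to handle this by integrating by parts in only half of the smoothing. This means using the bound $\|D_r\|_{L^2}\sim\sqrt r$ together with Cauchy--Schwarz against the coordinate marginal of the measure in the transverse directions. That trade would cost $\sqrt r$ and one factor of $(\log)^{3/2}$, producing the second term $(1+\log(r+1))^{d-3/2}/\sqrt r$ in \eqref{eq:c1decay}. Getting uniformity in $\Gamma$ for these marginal bounds is the step I am least sure of.
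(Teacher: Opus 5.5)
\textbf{Verdict: genuine gap.} The argument breaks at the far-field estimate in part (ii), which is exactly the step you flag.

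\textbf{What matches the paper.} Your part (i) and both $\kappa$ bounds agree with the paper. So does the architecture of part (ii): the gradient of $\mathcal Q_ru$ as the square partial sum of the reflected derivative, a near/far split with Jackson--Lebesgue near $\Gamma$, one-coordinate integration by parts away from it, and the passage to $W=U^2$ in the far region.

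\textbf{Where it fails.} The problem is more than uniformity in $\Gamma$. A point $x\in N_\delta$ is separated from the far support only in the Euclidean sense, so you need a coordinate partition of unity $\theta_\ell^x$ before integrating by parts. The integration coordinate $\ell$ then generally differs from the derivative index $j$.

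Your intermediate bound ($r^{-1}$ times the line variation times the $L^1$ norms of the transverse Dirichlet kernels) is justified only for $\ell=j$. In that case the singular part is the diagonal entry $M_{jj}$ of $M=2I_d\mathcal L^d-\mathbf D^2W$. Testing $M$ against periodic functions independent of $y_q$ gives the exact identity $\int h(y_{\widehat q})\,dM_{qq}(y)=4\int h(z)\,dz$. Hence $\prod_{m\ne q}\abs{D_n(x_m-y_m)}$ is absorbed with no dependence on $\Gamma$ at all; this answers your uniformity worry.

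For $\ell\ne j$ the relevant measure is the off-diagonal entry $M_{\ell j}$, and nothing analogous holds. Its signed marginal in $y_{\widehat\ell}$ vanishes, but that says nothing about the total variation $\abs{M_{\ell j}}$. So you get no uniform bound on the variation of $\partial_jW$ along $\ell$-lines. A scalar Cauchy--Schwarz against ``the coordinate marginal'' does not help either. The integral $\int\abs{D_n}^2\,d\mu$ is controlled by $\norm{D_n}_{L^2}^2$ when $\mu$ has bounded density, but not for singular $\mu$: a point mass at $x_j$ gives $(2n+1)^2$.

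\textbf{The missing ingredient} is the Cauchy--Schwarz inequality for the positive-semidefinite matrix measure $M$:
$\int fg\,d\abs{M_{\ell j}}\le(\int f^2\,dM_{\ell\ell})^{1/2}(\int g^2\,dM_{jj})^{1/2}$ for Borel $f,g\ge0$. This moves the off-diagonal entry onto the two diagonal ones, where the identity above applies.

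The paper takes $f=g=\Xi^{1/2}$ with $\Xi=\prod_{m\ne\ell}\abs{D_n(x_m-y_m)}$, $n=r-1$ and $L_n=1+\log(n+2)$.
\begin{itemize}
\item The $M_{\ell\ell}$ factor is $O(L_n^{d-1})$.
\item In the $M_{jj}$ factor one first bounds $\abs{D_n(x_j-y_j)}\le 2n+1$, which gives $O((n+1)L_n^{d-2})$.
\end{itemize}
Their geometric mean $\sqrt{n+1}\,L_n^{d-3/2}$ is exactly the second term in \eqref{eq:c1decay}.

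Your $\norm{D_n}_{L^2}\sim\sqrt n$ heuristic can be made rigorous through the same inequality with an asymmetric choice. Take $\Xi'=\prod_{m\ne\ell,j}\abs{D_n(x_m-y_m)}$, $f^2=\abs{D_n(x_j-y_j)}^2\,\Xi'$ and $g^2=\Xi'$. Then $f^2$ is independent of $y_\ell$ and $g^2$ is independent of $y_j$, and this choice even gives $\sqrt{n+1}\,L_n^{d-2}$. Without the matrix inequality and the exact diagonal marginals, however, the step does not close.

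\textbf{A simplification you are missing.} You have $U^2=\dist(\cdot,\Gamma_{\mathrm{per}})^2$ on all of $\R^d$. So the reflection jumps you list are concave kinks already contained in $M$. Working on $\mathbb T_2^d$ removes your boundary terms and gives $M_{ii}(\mathbb T_2^d)=2^{d+1}$ exactly.
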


\begin{proof}

We first record a periodic approximation estimate used in both parts.
Set $\mathbb T_2=\R/2\mathbb Z, \ \mathbb T_2^d=(\mathbb T_2)^d, \ \varpi(t)=\dist(t,2\mathbb Z)$, write $\dist_{\mathbb T_2}([s],[t])=\dist(s-t,2\mathbb Z)$ and
\begin{equation*}
d_{\mathbb T_2^d}(x,y)
=\left(\sum_{i=1}^d\dist_{\mathbb T_2}(x_i,y_i)^2\right)^{1/2}.
\end{equation*}

For the $2$-periodic Dirichlet kernel, let
\begin{equation}
\begin{aligned}
D_n(t)&=1+2\sum_{k=1}^n\cos(\pi kt)
=\frac{\sin((n+\tfrac12)\pi t)}{\sin(\pi t/2)},\quad
\Lambda_n&=\frac12\int_{-1}^{1}\abs{D_n(t)}\,dt,
\end{aligned}
\end{equation}
where the quotient is defined by continuity at $2\mathbb Z$.
Indeed, the two expressions for $D_n$ imply $\abs{D_n(t)}\le\min\{2n+1,\abs{t}^{-1}\}$ for $0<\abs{t}\le1$.
Splitting the integral at $\abs{t}=(2n+1)^{-1}$ yields $\Lambda_n\le1+\log(2n+1)\le C(1+\log(n+1))$; see also \cite[Eq.~(12.1)]{Zygmund2002Trigonometric}.
Let $S_n^{\square}$ denote rectangular Fourier projection on $\mathbb T_2^d$, defined by the normalized integral:
\begin{equation*}
S_n^\square g(x)=2^{-d}\int_{\mathbb T_2^d}g(y)
\prod_{m=1}^dD_n(x_m-y_m)\,dy.
\end{equation*}
Tensorizing the one-dimensional Jackson estimate and applying Lebesgue's inequality
\cite[Theorem~(13.6) and Eq.~(13.25)]{Zygmund2002Trigonometric} yields, for every Lipschitz $F:\mathbb T_2^d\to\R$,
\begin{equation}\label{eq:periodic-jackson-lebesgue}
\norm{S_n^\square F-F}_{L^\infty(\mathbb T_2^d)}
\le C_d[F]_{\mathrm{Lip}(\mathbb T_2^d)}
\frac{(1+\log(n+1))^d}{n+1},\quad n\ge1.
\end{equation}
To prove part~(i), take the coordinatewise-even, $2$-periodic extension
\begin{equation*}
w^{\mathrm{ev}}(x)=w(\varpi(x_1),\ldots,\varpi(x_d)),
\end{equation*}
which satisfies $[w^{\mathrm{ev}}]_{\mathrm{Lip}(\mathbb T_2^d)}\le[w]_{\mathrm{Lip}(\Bx)}$.

Folding the integral onto $\Bx$ by coordinatewise evenness and using
\begin{equation*}
\frac{D_n(s-t)+D_n(s+t)}{2}
=1+2\sum_{k=1}^n\cos(\pi ks)\cos(\pi kt)
\end{equation*}
in each coordinate recovers the tensor cosine projection kernel.
Thus
\begin{equation*}
\mathcal Q_rw
=\bigl(S_{r-1}^{\square}w^{\mathrm{ev}}\bigr)|_\Bx, \quad \norm{\mathcal Q_r}_{C(\Bx)\to C(\Bx)}\le\Lambda_{r-1}^d.
\end{equation*}
Applying \eqref{eq:periodic-jackson-lebesgue} with $F=w^{\mathrm{ev}}$ and $n=r-1$, we obtain the asserted bound on $\tau_r^w(\Bx)$.
Moreover, we have
\begin{equation*}
\sum_{m=0}^{r-1}(\phi_m^{\mathrm{cos}}(x))^2
=1+2\sum_{m=1}^{r-1}\cos^2(\pi mx)\le2r-1.
\end{equation*}
Equality holds at $x=0$ and $x=1$.
And tensor factorization therefore yields $\kappa_r(\Bx)=(2r-1)^{d/2}$.
This completes part~(i).

We now prove part~(ii) under its reach and separation hypotheses.
Throughout this part, $C$ may change from line to line but depends only on $a$, $\delta$, and $d$.
The bound on $\kappa_{r,1}$ is easy to check.
In one dimension,
\begin{equation*}
\sum_{m=0}^{r-1}\abs{(\phi_m^{\mathrm{cos}})'(x)}^2
\le\frac{\pi^2}{3}r(r-1)(2r-1).
\end{equation*}
Consequently, tensor factorization yields that
\begin{equation*}
\sum_{\bi\in I_r}\norm{\nabla\phi_{\bi}^{\mathrm{cos}}(\xi)}^2
\le\frac{\pi^2}{3}d\,r(r-1)(2r-1)^d,
\end{equation*}
and hence $\kappa_{r,1} \le\kappa_{r,1}^{\mathrm{box}}$.
It remains to estimate $\tau_{r,1}^u$.

Put $n=r-1$ and $L_n=1+\log(n+2)$, and let $U$ be the coordinatewise-even, $2$-periodic extension of $u$ to $\mathbb T_2^d$.
We use the same notation for functions on the torus and their $2$-periodic lifts to $\R^d$.
Here $\partial_\ell$ denotes a weak derivative represented by a function, whereas $\mathbf{D}_\ell$ denotes a distributional derivative, which is a measure for a $BV$ function; $|\mathbf{D}_\ell p|$ denotes its total variation measure.
For smooth functions these derivatives agree with classical derivatives.
The notation $D_n$ continues to denote the Dirichlet kernel.
We write $\|\cdot\|_\infty$ for the $L^\infty$ essential supremum on the function's domain: $\mathbb T_2^d$ for fields, $\mathbb T_2$ for one-dimensional periodic functions, and $\R$ for scalar coefficient functions.
For continuous functions this agrees with the pointwise supremum used in $\|\cdot\|_{\infty,A}$.
The projection $S_n^\square$ defined above has frequency set $\{\mathbf k\in\mathbb Z^d:\norm{\mathbf k}_\infty\le n\}$.
Fix $j\in\{1,\ldots,d\}$.
Comparing Fourier coefficients, we obtain, distributionally,
\begin{equation}\label{eq:cos-fourier-gradient}
\partial_j\mathcal Q_ru=(S_n^\square\partial_jU)|_\Bx.
\end{equation}
Thus it remains to control $S_n^\square U_j-U_j$ uniformly on $N_\delta$, where $U_j=\partial_jU$.

We begin by introducing some notations and establishing a few basic properties.
Define the reflected periodic interface
\begin{equation*}
\Gamma_{\mathrm{per}}
=\{(\epsilon_i y_i+2m_i)_{i=1}^d:
y\in\Gamma,\ \epsilon\in\{-1,1\}^d,\ m\in\mathbb Z^d\}.
\end{equation*}
Let $F_{\mathrm{fold}}(x)=(\varpi(x_1),\ldots,\varpi(x_d))$.
For $x\in\R^d$ and $y\in\Bx$, we have
\begin{equation*}
\min_{\substack{\epsilon\in\{-1,1\}^d,\ m\in\mathbb Z^d}}
\norm{x-(\epsilon_i y_i+2m_i)_{i=1}^d}^2
=\norm{F_{\mathrm{fold}}(x)-y}^2.
\end{equation*}
Taking the infimum over $y\in\Gamma$, we therefore obtain
\begin{equation*}
W:=U^2=\dist(\,\cdot\,,\Gamma_{\mathrm{per}})^2.
\end{equation*}

For every nonempty closed set $A\subset\R^d$, the function $x\longmapsto \dist(x,A)^2-\norm{x}^2 =\inf_{y\in A}\bigl(\norm{y}^2-2x\cdot y\bigr)$ is concave.
Thus
\begin{equation}\label{eq:hessian-defect}
M:=2I_d\,\mathcal L^d-\mathbf{D}^2W
\end{equation}
is a positive-semidefinite, $2$-periodic matrix-valued Radon measure.
Regard $M$ as a measure on one period, namely on $\mathbb T_2^d$.
For $q\in\{1,\ldots,d\}$, let $y_{\widehat q}$ denote the coordinates other than $y_q$.
For every nonnegative Borel function $h:\mathbb T_2^{d-1}\to[0,\infty]$, we have
\begin{equation}\label{eq:hessian-marginal}
\int_{\mathbb T_2^d}h(y_{\widehat q})\,dM_{qq}(y)
=4\int_{\mathbb T_2^{d-1}}h(z)\,dz.
\end{equation}
In fact, for smooth $h$, the lift $h(y_{\widehat q})$ is independent of $y_q$, so its second derivative in that coordinate vanishes.
Testing \eqref{eq:hessian-defect} against this lift leaves only $2\mathcal L^d$; integration over the $q$th coordinate contributes a further factor $2$.
Equality on smooth tests identifies the corresponding measures on $\mathbb T_2^{d-1}$, yielding \eqref{eq:hessian-marginal} for all nonnegative Borel $h$.

We will also use the following Cauchy--Schwarz inequality for the entries of $M$
\cite[Proposition III.24]{Moszynski2022Spectral}:
for Borel $\Xi\ge0$,
\begin{equation}\label{eq:matrix-measure-cs}
\int \Xi\, d| M_{ij}|
\le\left(\int \Xi\,dM_{ii}\right)^{1/2}
     \left(\int \Xi\,dM_{jj}\right)^{1/2}.
\end{equation}

We then construct the following near--far decomposition.
Choose $\delta<\delta_1<\delta_2<a$, with $\delta_1$ and $\delta_2$ depending only on $a$ and $\delta$, and choose an even $b\in C^\infty(\R)$ depending only on $a$ and $\delta$, such that $b=0$ on $[-\delta_1,\delta_1]$ and $b=1$ outside $(-\delta_2,\delta_2)$.
Set
\begin{equation*}
U_j^{\mathrm{near}}=(1-b(U))U_j,
\qquad
U_j^{\mathrm{far}}=b(U)U_j.
\end{equation*}
Distinct reflected copies of $\Gamma$ are separated by at least $2a$ because $\Gamma$ stays at distance at least $a$ from every face of $\Bx$, and each copy has reach at least $a$.
Hence $\{|U|<\delta_2\}$ is the disjoint union of their $\delta_2$-reach tubes.
On each tube, $U$ is a signed distance function.
Applying the distance-function formula on both sides of each copy
\cite[Lemmas~14.16--14.17]{Gilbarg2001Elliptic}
and using the curvature bound $|\kappa_i|\le a^{-1}$ implied by $\reach(\Gamma)\ge a$, we obtain
\begin{equation*}
\norm{\mathbf{D}^2U(x)}_{\mathrm{op}}
\le\frac{1}{a-|U(x)|}\le\frac{1}{a-\delta_2}
\qquad (|U(x)|<\delta_2).
\end{equation*}
Since $1-b(U)$ vanishes outside these tubes, $U_j^{\mathrm{near}}$ has a periodic Lipschitz representative with a uniform Lipschitz bound.
Applying \eqref{eq:periodic-jackson-lebesgue} with $F=U_j^{\mathrm{near}}$ yields
\begin{equation}\label{eq:local-gradient-part}
\norm{S_n^\square U_j^{\mathrm{near}}-U_j^{\mathrm{near}}}
_{L^\infty(\mathbb T_2^d)}
\le C\frac{L_n^d}{n+1}.
\end{equation}
For the far piece, define $\alpha_b(t)=b(t)/(2t)$ for $t\ne0$ and $\alpha_b(0)=0$.
Since $b$ vanishes near zero, $\alpha_b$ is smooth and both $\alpha_b$ and $\alpha_b'$ are uniformly bounded.
Moreover, we have $U_j^{\mathrm{far}}=\alpha_b(U)\partial_jW$, since $\partial_jW=2UU_j$ almost everywhere.
Since $U$ is the even periodic extension of a signed distance function, $\|\nabla U\|_\infty\le1$ and $\|U\|_\infty\le\sqrt d$.
Together with the bounds on $b$, $\alpha_b$, and $\alpha_b'$, we deduce the uniform bounds
\begin{equation}\label{eq:far-field-uniform-bounds}
\begin{aligned}
\|\alpha_b\|_\infty+\|\alpha_b'\|_\infty&\le C,\quad
\|\nabla U\|_\infty+\|\partial_jW\|_\infty
+\|U_j^{\mathrm{far}}\|_\infty&\le C,
\end{aligned}
\end{equation}
where field norms are taken over $\mathbb T_2^d$ and coefficient norms over $\R$.
Since $M_{ii}=2\mathcal L^d-\mathbf{D}_{ii}W$, testing against the constant periodic function $1$, we obtain
\begin{equation*}
M_{ii}(\mathbb T_2^d)
=2\mathcal L^d(\mathbb T_2^d)-\langle W,\partial_{ii}1\rangle
=2^{d+1}.
\end{equation*}
For every Borel set $E$, the matrix $M(E)$ is positive semidefinite, so
\begin{equation*}
|M_{\ell j}(E)|
\le\sqrt{M_{\ell\ell}(E)M_{jj}(E)}
\le\tfrac12\bigl(M_{\ell\ell}(E)+M_{jj}(E)\bigr).
\end{equation*}
Since $\mathbf{D}_\ell(\partial_jW)=2\delta_{\ell j}\mathcal L^d-M_{\ell j}$, where $\delta_{\ell j}$ is the Kronecker delta, we have
\begin{equation*}
\begin{aligned}
|\mathbf{D}_\ell(\partial_jW)|(\mathbb T_2^d)
\le 2\delta_{\ell j}\mathcal L^d(\mathbb T_2^d)
+|M_{\ell j}|(\mathbb T_2^d)\le 2^{d+1}(1+\delta_{\ell j})<\infty.
\end{aligned}
\end{equation*}
Together with the boundedness of $\partial_jW=2UU_j$, this proves $\partial_jW\in BV(\mathbb T_2^d)$.
Moreover, $U\in W^{1,\infty}$ and $\alpha_b\in C_b^1$ imply $\alpha_b(U)\in W^{1,\infty}$.
The BV chain and product rules~\cite[Theorem~3.96 and Example~3.97]{Ambrosio2000Functions} therefore yield
\begin{equation}\label{eq:remote-bv-derivative}
\mathbf{D}_\ell U_j^{\mathrm{far}}
=\bigl(2\alpha_b(U)\delta_{\ell j}
+\alpha_b'(U)\partial_\ell U\,\partial_jW\bigr)\mathcal L^d
-\alpha_b(U)M_{\ell j}.
\end{equation}
In particular, $U_j^{\mathrm{far}}\in BV(\mathbb T_2^d)\cap L^\infty(\mathbb T_2^d)$.

We next present the elementary kernel estimate used below.
Fix $x\in\mathbb T_2^d$ and $\ell\in\{1,\ldots,d\}$.
Suppose that $p\in BV(\mathbb T_2^d)\cap L^\infty(\mathbb T_2^d)$ and its support is contained in
\begin{equation*}
\{y:\dist_{\mathbb T_2}(x_\ell,y_\ell)\ge\varsigma\}
\end{equation*}
for some $\varsigma>0$.
Then there exists a constant $C_\varsigma$ depending only on $d$ and $\varsigma$ such that
\begin{equation}\label{eq:off-support-bv}
\begin{aligned}
&\left|\int_{\mathbb T_2^d}p(y)
\prod_{m=1}^dD_n(x_m-y_m)\,dy\right|\\
&\le\frac{C_\varsigma}{n+1}\left\{
\int_{\mathbb T_2^d}\prod_{m\ne\ell}|D_n(x_m-y_m)|
\,d|\mathbf{D}_\ell p|(y)
+\norm{p}_{L^\infty(\mathbb T_2^d)}L_n^{d-1}\right\}.
\end{aligned}
\end{equation}
The proof is given below.
Let $K(y)=\prod_{m\ne\ell}D_n(x_m-y_m)$.
For $t\in\mathbb T_2$, put $s(t)=\dist_{\mathbb T_2}(x_\ell,t)$.
Write $A_n=(n+\tfrac12)\pi$ and $z=x_\ell-t$.
For $s(t)>0$, define
\begin{equation*}
R(t)=\frac{\cos(A_nz)}{A_n\sin(\pi z/2)},\qquad
E(t)=-\frac{\pi}{2A_n}
\frac{\cos(A_nz)\cos(\pi z/2)}{\sin^2(\pi z/2)}.
\end{equation*}
Differentiating $R$, using $\partial_tz=-1$, we obtain $R'(t)=D_n(x_\ell-t)-E(t)$.
If $t=y_\ell$ for some $y\in\operatorname{supp}p$, the separation assumption yields $\varsigma\le s(t)\le1$, hence
\begin{equation*}
|\sin(\pi z/2)|=\sin(\pi s(t)/2)
\ge\sin(\pi\varsigma/2)>0.
\end{equation*}
Consequently, for these values of $t$,
\begin{equation*}
D_n(x_\ell-t)=R'(t)+E(t),\qquad
|R(t)|+|E(t)|\le\frac{C_\varsigma}{n+1}.
\end{equation*}
Thus integration of the oscillatory numerator produces the factor $(n+1)^{-1}$.
However, $R$ is singular at $s(t)=0$ and cannot yet be used as a smooth test function on the whole torus.
Choose a smooth $2$-periodic cutoff $\zeta:\mathbb T_2\to[0,1]$ such that
\begin{equation*}
\zeta(t)=0\quad\text{if }s(t)\le\varsigma/3,
\qquad
\zeta(t)=1\quad\text{if }s(t)\ge2\varsigma/3.
\end{equation*}
Define $\widehat R(t)=\zeta(t)R(t)$ for $s(t)>0$, extending it by zero at $s(t)=0$, and define $ \widehat E(t)=D_n(x_\ell-t)-\widehat R'(t)$.
For $y\in\operatorname{supp}p$, we have $s(y_\ell)\ge\varsigma$, so $\zeta$ is identically one near $t=y_\ell$.
Thus $\widehat R'(y_\ell)=R'(y_\ell)$ and $\widehat E(y_\ell)=E(y_\ell)$.
In particular,
\begin{equation*}
\|\widehat R\|_\infty\le\frac{C_\varsigma}{n+1},\qquad
|\widehat E(y_\ell)|\le\frac{C_\varsigma}{n+1}
\quad(y\in\operatorname{supp}p).
\end{equation*}
For $p\in BV$, integration by parts means $\int p\,\partial_\ell\psi\,dy=-\int\psi\,d\mathbf{D}_\ell p$ for smooth periodic $\psi$.
Taking $\psi(y)=\widehat R(y_\ell)K(y)$ and using $\partial_\ell K=0$, we have
\begin{equation*}
\begin{aligned}
\int_{\mathbb T_2^d}p(y)D_n(x_\ell-y_\ell)K(y)\,dy
&=-\int_{\mathbb T_2^d}\widehat R(y_\ell)K(y)\,d\mathbf{D}_\ell p(y) +\int_{\mathbb T_2^d}p(y)\widehat E(y_\ell)K(y)\,dy.
\end{aligned}
\end{equation*}
Using the bounds on $\widehat R$ and $\widehat E$, together with the total variation inequality for the measure integral, we obtain that
\begin{equation*}
\left|\int_{\mathbb T_2^d}p(y)
\prod_{m=1}^dD_n(x_m-y_m)\,dy\right|\leq \frac{C_\varsigma}{n+1}
\left(\int_{\mathbb T_2^d}|K|\,d|\mathbf{D}_\ell p|
+\|p\|_\infty\int_{\mathbb T_2^d}|K|\,dy\right).
\end{equation*}
Applying the estimate that $ \int_{\mathbb T_2^d}|K(y)|\,dy =2\|D_n\|_{L^1(\mathbb T_2)}^{d-1}\le C L_n^{d-1} $, we conclude \eqref{eq:off-support-bv}.

We now apply this estimate to $U_j^{\mathrm{far}}$, absorbing the fixed normalizing factor $2^{-d}$ into $C$.
Regard $\overline{N_\delta}\subset\Bx$ as a subset of the torus.
If $x\in\overline{N_\delta}$ and $y\in\operatorname{supp}U_j^{\mathrm{far}}$, then $|U(x)|\le\delta$ and $|U(y)|\ge\delta_1$.
Since $|U|=\dist(\,\cdot\,,\Gamma_{\mathrm{per}})$ is $1$-Lipschitz, we have $d_{\mathbb T_2^d}(x,y)\ge\delta_1-\delta$. Thus some coordinate distance is at least $(\delta_1-\delta)/\sqrt d$.
The associated coordinate-separation sets cover $\operatorname{supp}U_j^{\mathrm{far}}$.
For each fixed $x$, let $\sigma=(\delta_1-\delta)/\sqrt d$.
The coordinate-distance vector has Euclidean norm at least $\delta_1-\delta$, so its largest component is at least $\sigma$.
Choose a smooth periodic cutoff $\chi:\mathbb T_2\to[0,1]$ that is zero when $\dist_{\mathbb T_2}(t,0)\le\sigma/2$ and one when $\dist_{\mathbb T_2}(t,0)\ge\sigma$, with $C^1$ norm depending only on $\sigma$.
Set
\begin{equation*}
\chi_\ell^x(y)=\chi(y_\ell-x_\ell),\qquad
H_x(y)=\sum_{m=1}^d\chi_m^x(y).
\end{equation*}
Then $H_x\ge1$ on $\operatorname{supp}U_j^{\mathrm{far}}$.
To normalize smoothly even where $H_x=0$, choose $\rho\in C^\infty([0,\infty);[0,1])$ with $\rho=0$ on $[0,1/4]$ and $\rho=1$ on $[1/2,\infty)$, and define
\begin{equation*}
\theta_\ell^x(y)=\rho(H_x(y)){\chi_\ell^x(y)}/{H_x(y)},
\end{equation*}
with value zero when $H_x=0$.
These functions are smooth, lie in $[0,1]$, and sum to one on the far-field support.
Their $C^1$ bounds depend only on $\sigma$ and $d$, since division occurs only where $H_x>1/4$.
In particular,
\begin{equation}\label{eq:partition-uniform-bounds}
\|\theta_\ell^x\|_\infty\le1,\qquad
\|\nabla\theta_\ell^x\|_\infty\le C,
\end{equation}
uniformly in $x$ and $\ell$.
Moreover,
\begin{equation*}
\operatorname{supp}(\theta_\ell^xU_j^{\mathrm{far}})
\subseteq\{y:\dist_{\mathbb T_2}(x_\ell,y_\ell)\ge\sigma/2\}.
\end{equation*}
Multiplication by the smooth cutoff preserves $BV\cap L^\infty$, so \eqref{eq:off-support-bv} applies to each piece $p=\theta_\ell^xU_j^{\mathrm{far}}$ with the same separation parameter $\varsigma=\sigma/2$.

Fix $x\in\overline{N_\delta}$ and set $p=\theta_\ell^xU_j^{\mathrm{far}}$ for each $\ell$.
First, \eqref{eq:far-field-uniform-bounds} and \eqref{eq:partition-uniform-bounds} imply
\begin{equation*}
\|p\|_\infty
\le\|\theta_\ell^x\|_\infty\|U_j^{\mathrm{far}}\|_\infty
\le C.
\end{equation*}
Next, by the product rule and \eqref{eq:remote-bv-derivative}, we have
\begin{equation*}
\begin{aligned}
\mathbf{D}_\ell p
&=(\partial_\ell\theta_\ell^x)U_j^{\mathrm{far}}\,\mathcal L^d
  +\theta_\ell^x\mathbf{D}_\ell U_j^{\mathrm{far}}\\
&=\Bigl[(\partial_\ell\theta_\ell^x)U_j^{\mathrm{far}}
  +\theta_\ell^x\bigl(2\alpha_b(U)\delta_{\ell j}
  +\alpha_b'(U)\partial_\ell U\,\partial_jW\bigr)\Bigr]\mathcal L^d -\theta_\ell^x\alpha_b(U)M_{\ell j}.
\end{aligned}
\end{equation*}
Using \eqref{eq:far-field-uniform-bounds} and \eqref{eq:partition-uniform-bounds} again, all scalar factors above are uniformly bounded.
Taking total variation, we have
\begin{equation*}
|\mathbf{D}_\ell p|\le C\mathcal L^d+C|M_{\ell j}|.
\end{equation*}
With $\Xi(y)=\prod_{m\ne\ell}|D_n(x_m-y_m)|$, tensor factorization gives $\int\Xi\,dy=2\|D_n\|_{L^1(\mathbb T_2)}^{d-1}\le CL_n^{d-1}$.
Substituting these bounds into \eqref{eq:off-support-bv} yields
\begin{equation}
\label{eq:off-support-bv-2}
\left|\int_{\mathbb T_2^d}p(y)
\prod_{m=1}^dD_n(x_m-y_m)\,dy\right|
\le\frac{C}{n+1}
\left(L_n^{d-1}+\int_{\mathbb T_2^d}\Xi\,d|M_{\ell j}|\right).
\end{equation}
Thus it remains to bound the last integral; all cutoff weights have already been absorbed into $C$.

Since $\Xi$ is independent of $y_\ell$, the marginal identity \eqref{eq:hessian-marginal} yields
\begin{equation*}
\int\Xi\,dM_{\ell\ell}
=4\|D_n\|_{L^1(\mathbb T_2)}^{d-1}
\le CL_n^{d-1}.
\end{equation*}
For $\ell=j$, this is the required bound, because $M_{jj}$ is nonnegative.
For $\ell\ne j$, first bound the $j$th kernel in $\Xi$ by $\|D_n\|_\infty=2n+1$.
The remaining product is independent of $y_j$, so using the same marginal identity we derive that
\begin{equation*}
\begin{aligned}
\int\Xi\,dM_{jj}
&\le(2n+1)\int\prod_{m\ne\ell,j}|D_n(x_m-y_m)|\,dM_{jj}(y)\\
&\le C(n+1)\|D_n\|_{L^1(\mathbb T_2)}^{d-2}
\le C(n+1)L_n^{d-2}.
\end{aligned}
\end{equation*}
There are $d-2$ kernels here; integration over the unused coordinate $y_\ell$ contributes only a factor $2$.
Applying \eqref{eq:matrix-measure-cs} now yields
\begin{equation*}
\begin{aligned}
\int\Xi\,d|M_{\ell j}|
&\le\left(\int\Xi\,dM_{\ell\ell}\right)^{1/2}
     \left(\int\Xi\,dM_{jj}\right)^{1/2}\le C\sqrt{n+1}\,L_n^{d-3/2}.
\end{aligned}
\end{equation*}

For each fixed $x$, the partition satisfies $\sum_{\ell=1}^d\theta_\ell^xU_j^{\mathrm{far}}=U_j^{\mathrm{far}}$.
Applying \eqref{eq:off-support-bv-2} with the bounds established above, we obtain
\begin{equation*}
\begin{aligned}
|S_n^\square U_j^{\mathrm{far}}(x)|
&=2^{-d}\left|\sum_{\ell=1}^d\int_{\mathbb T_2^d}
\theta_\ell^x(y)U_j^{\mathrm{far}}(y)
\prod_{m=1}^dD_n(x_m-y_m)\,dy\right|\\
&\le\frac{C}{n+1}
\left(L_n^{d-1}+\sqrt{n+1}\,L_n^{d-3/2}\right).
\end{aligned}
\end{equation*}
Since $C$ is independent of $x$, taking the supremum, we have
\begin{equation}\label{eq:remote-gradient-part}
\sup_{x\in\overline{N_\delta}}|S_n^\square U_j^{\mathrm{far}}(x)|
\le C\left\{
\frac{L_n^{d-1}}{n+1}
+\frac{L_n^{d-3/2}}{\sqrt{n+1}}
\right\}.
\end{equation}

On $\overline{N_\delta}$, $U_j^{\mathrm{far}}=0$ and $U_j^{\mathrm{near}}=U_j$.
Hence \eqref{eq:cos-fourier-gradient} yields
\begin{equation*}
\partial_j\mathcal Q_ru-\partial_ju
=S_n^\square U_j^{\mathrm{near}}-U_j^{\mathrm{near}}
+S_n^\square U_j^{\mathrm{far}}.
\end{equation*}
Combining \eqref{eq:local-gradient-part} and \eqref{eq:remote-gradient-part}, bounding the Euclidean norm of the gradient error by the sum of its componentwise absolute values, and absorbing $L_n^{d-1}/(n+1)$ into $L_n^d/(n+1)$ proves
\begin{equation*}
\tau_{r,1}^u
\le C_{a,\delta,d}\left\{
\frac{(1+\log(r+1))^d}{r}
+\frac{(1+\log(r+1))^{d-3/2}}{\sqrt r}
\right\}.
\end{equation*}
Here we used $n+1=r$ and $L_n=1+\log(r+1)$.
All constants depend only on $a$, $\delta$, and $d$.
Together with the bound for $\kappa_{r,1}$, we conclude \eqref{eq:c1decay} and complete part~(ii).
\end{proof}

\subsubsection{Completion of the proof}
\begin{proof}
Use $\varepsilon$ as the tube radius in the preceding estimates.
Put $\varepsilon_1=\varepsilon/(1+\Lambda)$.
For any $y=(\Gamma,f)\in\mathcal G(K)$ set $u=\sdf_\Gamma$ and $v=\Ext_{[f]_{\mathrm{Lip}(\Gamma)}}^{\mathrm{mid}}[f]$; hence $[u]_{\mathrm{Lip}}\le1$ and $[v]_{\mathrm{Lip}}\le\Lambda$.
By (H2) and Proposition~\ref{prop:approx}, we can choose $m=r_{\mathrm{out}}\ge2$, uniformly in $y$, such that
\begin{equation}\label{eq:opchoice-m}
\tau_m^u(\Bx)\le\varepsilon_1/2,\qquad
\tau_m^v(\Bx)\le\varepsilon_1/2,\qquad \tau_{m,1}^u\le1/2.
\end{equation}
Set $\rho=\min\{\varepsilon_1/(2\kappa_m(\Bx)), 1/(4\kappa_{m,1}^{\mathrm{box}})\}>0$.
Proposition~\ref{prop:approx}(i) verifies the hypotheses of Corollary~\ref{cor:fixed-codeapprox}.
Thus there exist an integer $n=r_{\mathrm{in}}\ge2$ and a continuous function $F:\R^{2n^d}\to\R^{2m^d}$ such that
\begin{equation}\label{eq:opcode}
\sup_{x\in K}\norm{F(\mathcal E_nx)-\mathcal E_m(\mathcal G(x))}_2<\rho.
\end{equation}
For any $x\in K$, write $\mathcal G(x)=(\Gamma,f)$, let $(U,V)$ be synthesized from $F(\mathcal E_n x)$, and set $\Sigma=U^{-1}(0)$ and $g=V|_{\Sigma}$.
Combining estimates \eqref{eq:opchoice-m} and \eqref{eq:opcode} we have
\begin{equation*}
\eta_m^u(\Bx),\eta_m^v(\Bx)<\varepsilon_1\le\varepsilon,
\qquad \eta_{m,1}^u<3/4.
\end{equation*}
Applying Lemma~\ref{lem:decode-certificate}, we conclude the claimed unique normal graph and projection diffeomorphism and, for $q=(\pi_\Gamma|_\Sigma)^{-1}$,
\begin{equation*}
d_H(\Sigma,\Gamma)<\varepsilon_1\le\varepsilon,\quad
d_H^\times(\operatorname{gr}_\Sigma g,\operatorname{gr}_\Gamma f)<\varepsilon,
\quad \norm{g\circ q-f}_{L^p(\Gamma)}<\abs{\Gamma}^{1/p}\varepsilon.
\end{equation*}

We now show that $(\Sigma,g)\in\mathcal X$.
Let $T(y,t)=y+t\nu_\Gamma(y)$ and $H=\norm h_\infty<\varepsilon$; choose $\varepsilon<b<a$ and $\chi\in C_c^1((-b,b))$ with $\chi(0)=1$, $H\norm{\chi'}_\infty<1$.
Then the map
\begin{equation*}
\Phi(x)=
\begin{cases}
T\bigl(y,t+h(y)\chi(t)\bigr),
&x=T(y,t)\in N_b,\\[1mm]
x,&x\notin N_b.
\end{cases}
\end{equation*}
defines a $C^1$ diffeomorphism of $\Bx$ and $\Phi(\Gamma)=\Sigma$.
The estimate $\norm{U-u}_\infty<\varepsilon$ implies that $U$ and $u$ share the same signs off the $\varepsilon$-tube.
Since $\partial_tU(T(y,t))\ge1-\eta_{m,1}^u>0$ for $|t|<\varepsilon$, we have
\begin{equation*}
U(T(y,s))<0\quad\Longleftrightarrow\quad s<h(y).
\end{equation*}
Because $t+h(y)\chi(t)$ is strictly increasing with respect to $t$, we obtain that
\begin{equation*}
t<0
\quad\Longleftrightarrow\quad
t+h(y)\chi(t)<h(y)
\quad\Longleftrightarrow\quad
U(\Phi(T(y,t)))<0.
\end{equation*}
Since $t<0$ describes $\Omega_\Gamma$ in $N_b$ and $\Phi$ is the identity off $N_b$, we conclude that
\begin{equation*}
\{U<0\}=\Phi(\Omega_\Gamma).
\end{equation*}
Define $\Omega_\Sigma:=\Phi(\Omega_\Gamma)$.
It is nonempty, open, and compactly contained in $\operatorname{int}\Bx$, and
\begin{equation*}
\partial\Omega_\Sigma
=\Phi(\partial\Omega_\Gamma)
=\Phi(\Gamma)=\Sigma.
\end{equation*}
In addition, $V\in V_m\subset C^1(\Bx)$ implies that $g=V|_\Sigma$ is Lipschitz.
Thus $(\Sigma,g)\in\mathcal X$.

Finally, set
\begin{equation*}
e_*=\max_{x\in K}\norm{F(\mathcal E_nx)-\mathcal E_m(\mathcal G(x))}_2,
\qquad \rho_0=\rho-e_*>0.
\end{equation*}
If a continuous $F'$ satisfies the stated strict $\rho_0$ bound, the triangle inequality yields \eqref{eq:opcode} with $F'$, so the whole argument applies.
Since $\mathcal E_n(K)$ is compact, such an $F'$ may be chosen as a neural network \cite{Chen1995Universal,Jin2022MIONet}.
\end{proof}

%% file: sec/numerical_example.tex
We use three experiments to test distinct aspects of the IAE: cross-space prediction on variable Poisson domains, interface prediction through Hele--Shaw mergers, and joint interface--surfactant prediction in Stokes flow.
Except for the standalone GINO comparison, all learned maps are product MIONets~\cite{Jin2022MIONet}.
Unless stated otherwise, errors are evaluated on the full held-out set and reported as the mean and standard deviation over five independent training runs.
Detailed data generation, discretization, network, optimization, and postprocessing protocols are documented in Appendices~\ref{app:numerical-shared}--\ref{app:numerical-stokes}. 

\subsection{Variable-domain Poisson problems}
\label{sec:ex1}
We first demonstrate, with cross-space geometric operators as a case study, that the IAE can effectively encode information and outperforms existing approaches in terms of accuracy. Consider the Poisson equation:
\begin{equation}
 \begin{aligned}
 -\nabla\!\cdot\!\bigl(k(\xi)\nabla u(\xi)\bigr)&=s(\xi)
 &&\text{in }\Omega, \qquad k(\xi)>0\\
 u(\xi)&=g(\xi)&&\text{on }\partial\Omega.
 \end{aligned}
 \label{eq:poisson}
\end{equation}
The target operator is
\[
 \mathcal G_{\mathrm P}:(\Omega,k,s,g)\longmapsto u,
\]
where $u$ solves Eq.~\eqref{eq:poisson}.
The domains are either smooth radial domains or radial domains with a localized boundary notch:
\begin{equation}
 \Omega=\left\{\xi_0+s(\cos\theta,\sin\theta):
 0\le s<\varrho(\theta),\ 0\le\theta<2\pi\right\}.
 \label{eq:poissondomain}
\end{equation}
Here $\varrho$ is the unnotched radius or its modification by a localized
Gaussian notch, as specified in Appendix~\ref{app:numerical-poisson}.
The dataset contains $40000$ training samples and $1000$ test samples.

Writing $\Gamma=\partial\Omega$, the 2D-MFE and 1D-MFE encode geometry by area and boundary moments, respectively,
\begin{equation}
 \int_\Omega\phi_i^{\mathrm{Leg}}(\xi_1)
 \phi_j^{\mathrm{Leg}}(\xi_2)\,d\xi,
 \qquad
 \int_\Gamma\phi_i^{\mathrm{Leg}}(\xi_1)
 \phi_j^{\mathrm{Leg}}(\xi_2)\,ds,
 \label{eq:mfedgeom}
\end{equation}
where $\phi_m^{\mathrm{Leg}}(x)=\sqrt{2m+1}\,P_m(2x-1)$ and $P_m(1)=1$.
Both MFE variants encode the boundary condition by
\begin{equation}
 (z_g)_{ij}=\int_\Gamma g(\xi)
 \phi_i^{\mathrm{Leg}}(\xi_1)
 \phi_j^{\mathrm{Leg}}(\xi_2)\,ds.
 \label{eq:mfeboundary}
\end{equation}
The IAE uses either the cosine or Legendre basis functions to encode the geometry and boundary conditions.
All models use the same domain moments for $h\in\{k,s\}$,
\begin{equation}
 (z_h)_{ij}=\int_\Omega h(\xi)
 \phi_i^{\mathrm{Leg}}(\xi_1)
 \phi_j^{\mathrm{Leg}}(\xi_2)\,d\xi,
 \qquad 0\le i,j<r.
 \label{eq:mfefield}
\end{equation}
Each input code block has order $r=12$ and dimension $144$.
For $h=s$, we denote the source code by $z_{\mathrm{src}}$.
These four encoding configurations use the same four-branch value-head MIONet,
\begin{equation}
\begin{gathered} \begin{aligned}
 F_\theta^{(1)}:
 (z_{\mathrm{geom}},z_k,z_{\mathrm{src}},z_g)&\longmapsto\widehat u,\\
 \widehat u(\xi)
 &=\inner{
 \mathcal B_{\mathrm{geom}}(z_{\mathrm{geom}})\odot
 \mathcal B_k(z_k)\odot
 \mathcal B_{s}(z_{\mathrm{src}})\odot
 \mathcal B_g(z_g)}{\mathcal T(\xi)}+\beta.
 \end{aligned}
\end{gathered}
\label{eq:ex1net}
\end{equation}

The comparison uses the raw code (\emph{none}), coordinatewise standardization (\emph{per-dim}), and low-frequency rescaling (\emph{low-freq}).
For training codes $c^{(n)}\in\R^q$, $n=1,\ldots,N_{\mathrm{tr}}$, the two normalized forms are
\begin{equation}
\begin{aligned}
&\mu_j
=\frac1{N_{\mathrm{tr}}}\sum_{n=1}^{N_{\mathrm{tr}}}c_j^{(n)},\quad
\sigma_j^2:=\frac1{N_{\mathrm{tr}}}
\sum_{n=1}^{N_{\mathrm{tr}}}(c_j^{(n)}-\mu_j)^2,\quad
\sigma_{\mathrm{glob}}^2:=\frac1q\sum_{j=1}^q\sigma_j^2,\\
&\widetilde c_j^{\mathrm{per}}= \frac{c_j-\mu_j}{\sigma_j},
\quad 
\widetilde c_{\bi}^{\mathrm{lf}}
=\frac{\omega^{\mathrm{in}}_{\bi}}{\langle\omega^{\mathrm{in}}\rangle} \frac{c_{\bi}-\mu_{\bi}}{\sigma_{\mathrm{glob}}} ,\quad 
\omega^{\mathrm{in}}_{\bi}
=\frac{1}{1+\alpha_{\mathrm{in}}\norm{\bi}_2^2}.
\end{aligned}
\label{eq:numnorm}
\end{equation}
Here $\langle\omega^{\mathrm{in}}\rangle$ denotes the average over $\bi\in I_r$ and we take $\alpha_{\mathrm{in}}=0.5$.
We report the all-node relative discrete $\ell^2$ error, averaged over the $S$ test samples,
\begin{equation}
 \relL(\widehat u,u)=\frac{\norm{\widehat u-u}_2}{\norm{u}_2},
 \qquad
 \overline{\relL}=\frac1S\sum_{s=1}^S
 \relL(\widehat u_s,u_s).
 \label{eq:relL2}
\end{equation}

\begin{table}[tbp]
\centering
\caption{Relative prediction error for variable-domain Poisson problems. The results are obtained by taking the mean of 5 independent experiments, and the errors are recorded in the form of mean $\pm$ standard deviation. For each model, boldface marks the descriptive minimum among its three displayed normalization results.}
\label{tab:ex1}
\small
\setlength{\tabcolsep}{6pt}
\begin{tabular}{llccc}
\toprule
model / encoder & normalization & overall & notched & smooth \\
\midrule
2D-MFE & none & $0.0899{\pm}0.0037$ & $0.1144{\pm}0.0047$ & $0.0664{\pm}0.0030$ \\
 & per-dim & $\mathbf{0.0858{\pm}0.0013}$ & $0.1135{\pm}0.0016$ & $0.0593{\pm}0.0013$ \\
 & low-freq & $0.0880{\pm}0.0014$ & $0.1173{\pm}0.0019$ & $0.0600{\pm}0.0009$ \\
\addlinespace[2pt]
1D-MFE & none & $0.0921{\pm}0.0015$ & $0.1102{\pm}0.0016$ & $0.0748{\pm}0.0015$ \\
 & per-dim & $\mathbf{0.0653{\pm}0.0011}$ & $0.0826{\pm}0.0013$ & $0.0487{\pm}0.0010$ \\
 & low-freq & $0.0664{\pm}0.0006$ & $0.0864{\pm}0.0009$ & $0.0472{\pm}0.0006$ \\
\addlinespace[2pt]
IAE (cosine) & none & $0.2284{\pm}0.0277$ & $0.2293{\pm}0.0248$ & $0.2276{\pm}0.0306$ \\
 & per-dim & $0.0561{\pm}0.0004$ & $0.0668{\pm}0.0005$ & $0.0459{\pm}0.0006$ \\
 & low-freq & $\mathbf{0.0507{\pm}0.0005}$ & $0.0607{\pm}0.0006$ & $0.0412{\pm}0.0005$ \\
\addlinespace[2pt]
IAE (Legendre) & none & $0.2663{\pm}0.0530$ & $0.2635{\pm}0.0526$ & $0.2690{\pm}0.0536$ \\
 & per-dim & $0.0513{\pm}0.0007$ & $0.0616{\pm}0.0009$ & $0.0414{\pm}0.0006$ \\
 & low-freq & $\mathbf{0.0505{\pm}0.0003}$ & $0.0603{\pm}0.0002$ & $0.0411{\pm}0.0005$ \\
\midrule
GINO~\cite{Li2023Geometry} & --- & $0.0773{\pm}0.0017$ & $0.0852{\pm}0.0022$ & $0.0697{\pm}0.0011$ \\
\bottomrule
\end{tabular}
\end{table}
 
Table~\ref{tab:ex1} reports the relative prediction errors.
Normalization substantially improves both IAE variants, and low-frequency normalization has the best overall performance.
With normalization, both variants consistently outperform the MFE and GINO baselines in the overall, notched-domain, and smooth-domain evaluations.
The cosine and Legendre variants exhibit comparable accuracy, suggesting limited sensitivity to the choice of basis in this experiment.
Normalization also benefits the MFE representations, although its effect is less consistent across encoders and geometry subsets. 
These results support the effectiveness of the IAE in encoding the inputs of the variable-domain solution operator.

\subsection{Hele--Shaw flow}
\label{sec:ex2}
We next use a Hele--Shaw transmission problem to show that the IAE can directly predict interfaces even under topological changes.
Let the domain $\Omega$ be decomposed into $\Omega^-(t)$ and $\Omega^+(t)$ by $\Gamma(t)$, with mobility
\begin{equation}
 M(\xi,t)=
 \begin{cases}
 M^-,&\xi\in\Omega^-(t),\\
 M^+,&\xi\in\Omega^+(t).
 \end{cases}
 \label{eq:hsmobility}
\end{equation}
The pressure transmission and interface motion are formulated as
\begin{equation}
\begin{aligned}
\nabla \cdot (M \nabla p) &= -s &&\text{in } \Omega\setminus\Gamma(t),\\
[p]_{\Gamma(t)} &= 0,\quad [M\partial_n p]_{\Gamma(t)} = 0 &&\text{on } \Gamma(t),\\
p &= 0 &&\text{on } \partial\Omega,\\
V_n &= -M\partial_n p\big|_{\Gamma(t)},\quad \Gamma(0) = \Gamma_0.
\end{aligned}
\label{eq:hs}
\end{equation}
Here $p$ is the pressure, $s$ is the prescribed source, $n$ fixes the interface orientation, $[\cdot]_\Gamma$ denotes the corresponding jump, and $V_n$ is the normal velocity, which is single-valued due to flux continuity.
The target solution operator is
\[
 \mathcal G_{t_k}^{\mathrm{HS}}:(\Gamma_0,s)\longmapsto\Gamma(t_k).
\]
That is, each future interface is predicted directly from $(\Gamma_0,s)$.
The dataset contains $1000$ training trajectories and $387$ test trajectories; among the test trajectories, $222$ merge and $165$ do not.

\begin{table}[tbp]
\centering
\caption{Hausdorff errors for the Hele--Shaw flow. The results are reported as mean $\pm$ standard deviation over five independent experiments. Boldface marks the lowest displayed overall mean.}
\label{tab:ex2a}
\small
\begin{tabular}{lccc}
\toprule
configuration & overall $d_H$ & merge & non-merge \\
\midrule
nonlinear, $r_{\mathrm{out}}=24$ & $\mathbf{0.0110{\pm}0.0001}$ & $0.0131{\pm}0.0001$ & $0.0081{\pm}0.0001$ \\
nonlinear, $r_{\mathrm{out}}=16$ & $0.0129{\pm}0.0000$ & $0.0162{\pm}0.0001$ & $0.0083{\pm}0.0000$ \\
nonlinear, $r_{\mathrm{out}}=12$ & $0.0142{\pm}0.0001$ & $0.0184{\pm}0.0001$ & $0.0087{\pm}0.0000$ \\
linear, $r_{\mathrm{out}}=24$ & $0.0119{\pm}0.0001$ & $0.0149{\pm}0.0001$ & $0.0080{\pm}0.0001$ \\
\bottomrule
\end{tabular}
\end{table}

\begin{figure}[htbp]
\centering
\includegraphics[width=0.9\textwidth]{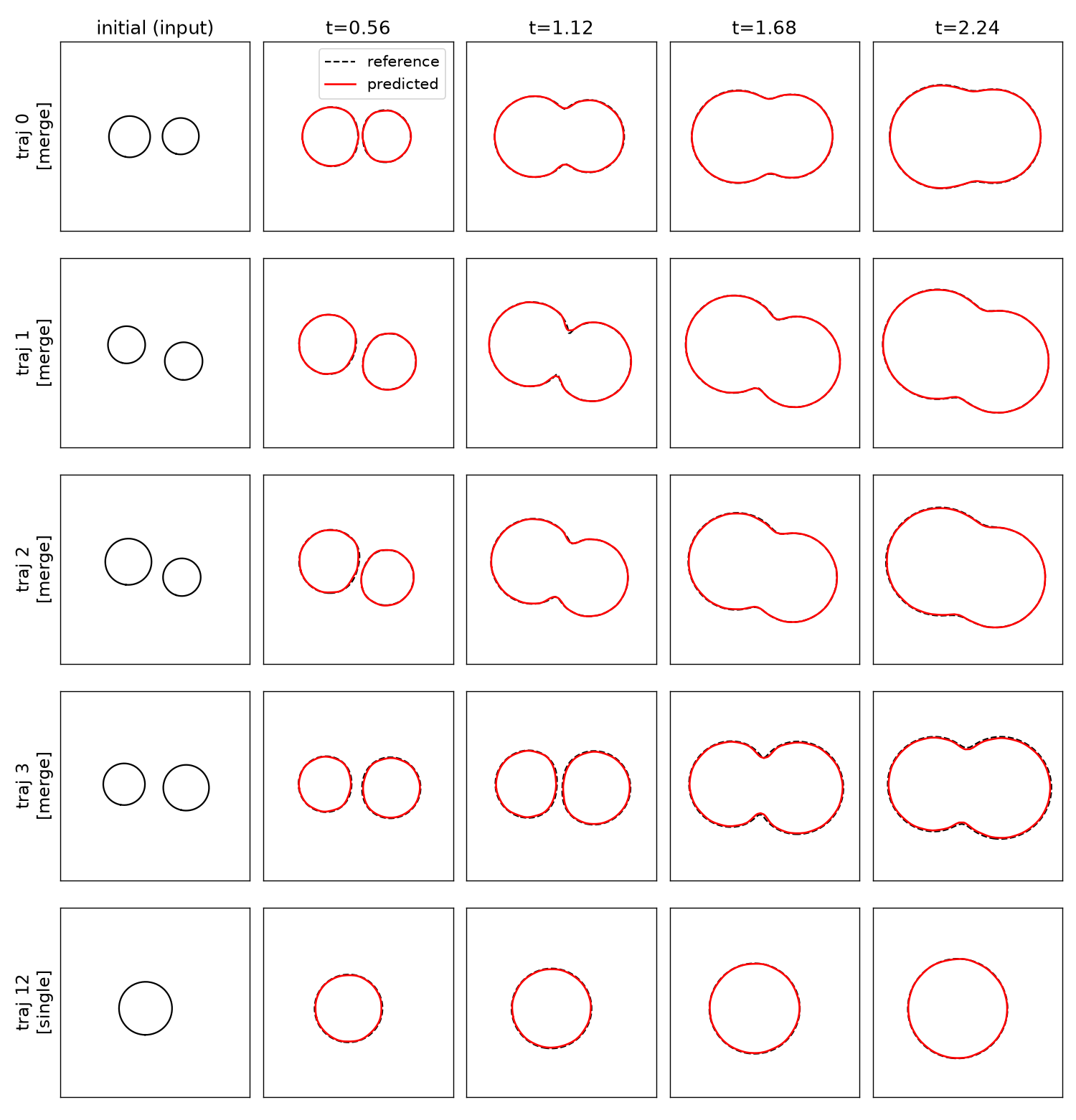}
\caption{Prediction of Hele--Shaw flow for the order-$24$ nonlinear head: four merging pairs and one single drop. The first column is the initial interface; later columns show red predictions and black dashed reference evolutions.}
\label{fig:ex2best}
\end{figure}

The initial interface is encoded by the IAE and the source by the Legendre moments in \eqref{eq:mfefield}.
Their normalized codes are processed by separate branches and combined with a time embedding:
\begin{equation}
 \begin{aligned}
 \bm a_k&=\mathcal B_{\mathrm{geom}}(z_{\mathrm{geom}}(0))
 \odot\mathcal B_{\mathrm{src}}(z_{\mathrm{src}})
 \odot\bm e_k^{\mathrm{time}}.
 \end{aligned}
 \label{eq:ex2map}
\end{equation}
We compare nonlinear and linear code heads using the same branch architectures:
\begin{equation}
 \begin{aligned}
 \text{nonlinear: }
 \widehat z=\mathcal H_{\mathrm{net}}(\bm a_k),\quad
 \text{linear: }
 \widehat z=W\bm a_k+\bm\beta.
 \end{aligned}
 \label{eq:ex2heads}
\end{equation}
The nonlinear head is evaluated at $r_{\mathrm{out}}\in\{12,16,24\}$ and the linear head at $r_{\mathrm{out}}=24$.
All configurations are trained with an auxiliary pressure loss;
the full training objective is specified in Appendix~\ref{app:numerical-hs}.
We report the Hausdorff distance $d_H$ between the unions of predicted and reference interfaces, averaged over the four future frames and all test trajectories.

Table~\ref{tab:ex2a} shows that, for the nonlinear head, increasing \(r_{\mathrm{out}}\) reduces the prediction error across all test subsets, with the greatest improvement observed for merging trajectories.
At \(r_{\mathrm{out}}=24\), the nonlinear head outperforms the linear head in the overall and merging cases, while the two heads yield comparable errors for non-merging trajectories.
This suggests that a higher-resolution output code combined with nonlinear prediction is better suited to representing such geometric changes involving interface merging.
Fig.~\ref{fig:ex2best} illustrates representative predicted trajectories, showing that the IAE can accurately capture both merging and non-merging interface dynamics.

\subsection{Stokes flow}
\label{sec:ex3}
Finally, we show that the IAE can jointly predict the evolving interface and the physical field defined on it.
We consider Stokes flow of two fluids with matched density and viscosity, which occupy the subdomains $\Omega_1(t)$ and $\Omega_2(t)$, separated by a moving interface $\Gamma(t)$.
In each phase, the velocity $\mathbf u_i$ and pressure $p_i$ satisfy the Stokes equations
\begin{equation}
 \nabla\!\cdot\mathbf T_i=\mathbf0,\qquad
 \nabla\!\cdot\mathbf u_i=0
 \quad\text{in }\Omega_i(t),\qquad
 \mathbf T_i=-p_i\mathbf I+\nabla\mathbf u_i+
 \nabla\mathbf u_i^{\!\top},\qquad i=1,2.
 \label{eq:stokes}
\end{equation}
With $\mathbf n$ pointing into $\Omega_2$, $\kappa_\Gamma=\nabla\!\cdot\mathbf n$, $\nabla_s=(\mathbf I-\mathbf n\otimes\mathbf n)\nabla$, and $[q]_\Gamma=q|_{\Omega_2}-q|_{\Omega_1}$, the interface and outer-boundary conditions are
\begin{equation}
 \begin{aligned}
 [\mathbf u]_\Gamma&=\mathbf0,
 &[\mathbf T\mathbf n]_\Gamma
 &=\frac1{\mathrm{Ca}}
 \bigl(\sigma\kappa_\Gamma\mathbf n-\nabla_s\sigma\bigr),\\
 V_n&=\mathbf u\!\cdot\!\mathbf n,
 &\mathbf u&=(y,0)\quad\text{on }\partial\Omega,\\
 \Gamma(0)&=\Gamma_0,
 &f(\cdot,0)&=f_0\quad\text{on }\Gamma_0.
 \end{aligned}
 \label{eq:stokesbc}
\end{equation}
The interface carries an insoluble surfactant concentration $f$.
The normalized Langmuir law and surface transport equation are
\begin{equation}
 \sigma(f)=
 \frac{1+E\ln(1-\chi_{\mathrm{cov}}f)}
 {1+E\ln(1-\chi_{\mathrm{cov}})},
 \qquad
 D_t^\Gamma f+f\,\nabla_s\!\cdot\mathbf u
 =\frac1{\mathrm{Pe}}\Delta_s f,
 \qquad \chi_{\mathrm{cov}}f<1.
 \label{eq:surftrans}
\end{equation}
Here $D_t^\Gamma$ and $\Delta_s$ denote the material derivative and Laplace--Beltrami operator.
The parameters are fixed at $(\mathrm{Ca},\mathrm{Pe},E,\chi_{\mathrm{cov}})=(0.5,10,0.2,0.2)$.
The target solution operator is
\[
 \mathcal G_{t_k}^{\mathrm{SS}}:(\Gamma_0,f_0)\longmapsto
 \bigl(\Gamma(t_k),f(\cdot,t_k)\bigr).
\] 
We generate $850$ training and $150$ test trajectories using the scheme of Xu et al.~\cite{Xu2006Level} with the interfacial jumps of \cite[Eq.~(3.12)]{LeVeque1997Immersed}. Sampling and implementation details are given in Appendix~\ref{app:numerical-stokes}.

Both the interface and the surfactant field are represented by order-$32$ cosine IAE codes.
A two-head MIONet predicts the two output codes jointly:
\begin{equation}
 \begin{aligned}
 \bm a_k&=\mathcal B_{\mathrm{geom}}(z_{\mathrm{geom}}(0))
 \odot\mathcal B_{\mathrm{surf}}(z_f(0))
 \odot\bm e_k^{\mathrm{time}},\\
 (\widehat z_{\mathrm{geom}}(t_k),\widehat z_f (t_k))
 &=\bigl(\mathcal H_{\mathrm{geom}}(\bm a_k),
 \mathcal H_f(\bm a_k)\bigr).
 \end{aligned}
 \label{eq:ex3map}
\end{equation}
\begin{table}[htbp]
\centering
\caption{Interface error $d_H$ and joint-state error $d_{\mathrm{gr}}$ for the three Stokes-flow interaction regimes. In both metrics, spatial coordinates are mapped to $\Bx=[0,1]^2$; the surfactant coordinate in $d_{\mathrm{gr}}$ remains on its original nondimensional scale. The error entries are reported as mean $\pm$ standard deviation over five independent experiments.}
\label{tab:ex3mode}
\small
\setlength{\tabcolsep}{6pt}
\begin{tabular}{lccccc}
\toprule
regime & \# train & \# test & interface $d_H$ & state $d_{\mathrm{gr}}$
& loop count \\
\midrule
shearing-past & 564 & 96 & $0.0042{\pm}0.0000$ & $0.0258{\pm}0.0004$ & $100\%$ \\
isolated & 187 & 32 & $0.0049{\pm}0.0001$ & $0.0413{\pm}0.0010$ & $100\%$ \\
co-translating & 99 & 22 & $0.0057{\pm}0.0003$ & $0.0367{\pm}0.0010$ & $100\%$ \\
\midrule
overall & 850 & 150 & $0.0046{\pm}0.0001$ & $0.0305{\pm}0.0003$ & $100\%$ \\
\bottomrule
\end{tabular}
\end{table}

\begin{figure}[!tbp]
\centering
\includegraphics[width=0.88\textwidth]{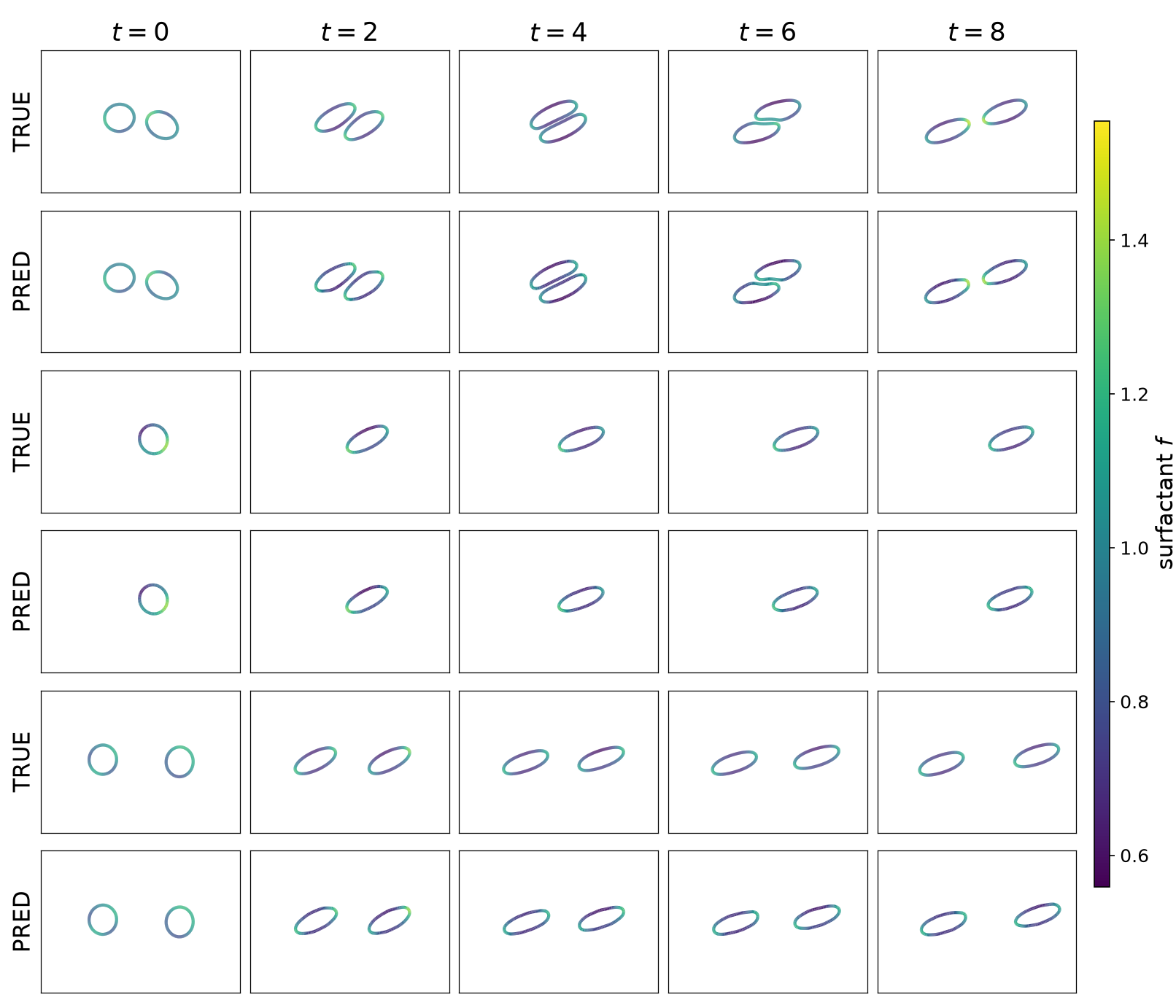}
\caption{Held-out Stokes--surfactant trajectories at $t=0,2,4,6,8$. From top to bottom, the row pairs show shearing-past, isolated, and co-translating dynamics, with the numerical reference (TRUE) above the prediction (PRED). At $t=0$, PRED is the initial state. Predictions use seed \texttt{s0}; the trajectory selection is described in Appendix~\ref{app:numerical-stokes}.}
\label{fig:ex3gallery}
\end{figure}

We evaluate the interface and joint-state predictions using $d_H$ and $d_{\mathrm{gr}}$ from \eqref{eq:hausdorff} and \eqref{eq:graphmetric}, respectively. 
Table~\ref{tab:ex3mode} shows consistently small errors across all three interaction regimes, indicating accurate prediction of both the interface evolution and the associated surfactant field.
For each of the five seeds, the predicted and reference loop counts agree on all $1158$ evaluated test frames.
This component-count statistic does not by itself certify topology preservation between output times.
The representative trajectories in Fig.~\ref{fig:ex3gallery} show close agreement with the numerical references at the displayed times.
These results demonstrate the effectiveness of the IAE for joint prediction of the evolving interface and surfactant field.

%% file: sec/conclusion.tex
This work studies operators whose inputs or outputs contain interface--function states.
We first established a general approximation principle that uses injective embeddings and convergent finite-dimensional codecs, without assuming linear state spaces or Schauder bases.
We then instantiated this principle with the IAE.
Under the stated regularity conditions, ambient approximation controls the reconstructed geometry and surface field and provides a reference-based topology certificate. 
The variable-domain Poisson problem examines cross-space prediction, while the Hele--Shaw and Stokes--surfactant problems examine within-space evolution of interfaces and their carried fields.
These numerical results support the IAE as a common representation for geometric operator learning. 

Several directions remain open.
On the theoretical side, the current results rely on pullback continuity and uniform output regularity, while the reference-based topology certificate excludes merger and breakup instants.
Establishing PDE-specific conditions for these assumptions, developing stability guarantees across topology changes, and deriving finite-data generalization bounds are therefore important next steps.
For evolutionary problems, the current time-conditioned models predict states at prescribed times without learning an explicit latent evolution law.
Coupling IAE codes with continuous-time dynamics could enforce dissipation mechanisms consistent with thermodynamic principles 
\cite{Chen2024Constructing,Zhu2026Identifiable} and potentially enable complete error analyses \cite{zhu2024error}.
The framework could also be extended to in-context learning \cite{Li2026InContext,Yang2023InContext}, using IAE codes to represent geometric states in data prompts.
Extending the framework to higher-dimensional moving surfaces, multiple interacting interfaces, strongly coupled bulk--surface systems, and more complex topology changes would help assess its scalability and clarify the need for localized or adaptive representations.

%% file: sec/numerical_details.tex
These appendices specify the numerical details for the three experiments.
\section{Shared numerical methods}
\label{app:numerical-shared}
\label{app:numerical-details}

\subsection{Discrete encoders}

Let $T$ map the physical box affinely to $\Bx=[0,1]^2$ and
set $w_B=w\circ T^{-1}$. For Poisson and Hele--Shaw, $T$ is the identity.
Unless stated otherwise, we compute tensor-product coefficients by midpoint quadrature:
\begin{equation}\label{eq:numproj}
\begin{aligned}
 a_p =\frac{p+1/2}{N},\
 \xi_{\bp}=(a_{p_1},a_{p_2}),\
 c_{\bi,N} =\frac1{N^2}\sum_{p_1,p_2=0}^{N-1}
 w_B(\xi_{\bp})\phi_{i_1}(a_{p_1})\phi_{i_2}(a_{p_2}).
\end{aligned}
\end{equation}
Here $\bi\in I_r =\{0,\ldots,r-1\}^2$. To encode geometry, we set
$w_B=\sdf_\Gamma\circ T^{-1}$.

For interface samples $(y_j,f_j)$ in physical coordinates, the Lipschitz constant $\widehat L$ in the McShane--Whitney extension is estimated with
\begin{equation}\label{eq:numlip}
 \widehat L=1.2\,\operatorname{Q}_{0.99}
 \left\{\frac{|f_j-f_l|}{\norm{y_j-y_l}_2+10^{-9}}:
 (j,l)\in\mathcal N\right\}+10^{-6}.
\end{equation}
Here $\operatorname{Q}_{0.99}$ is the empirical $0.99$-quantile. The pairs in $\mathcal N$ correspond to
endpoints of mesh boundary edges for Poisson and cyclically consecutive
entries of the concatenated interface-point array for Stokes.
This empirical $\widehat L$ need not yield exact interpolation.

\subsection{Networks and training}
Let $\operatorname{FNN}_{\rho}(d_0,\ldots,d_L)$ denote a fully connected
network with these layer widths, biases, hidden-layer activation $\rho$, and
a linear output layer. Specific parameters are given separately in each example.

Poisson inputs use the normalizations in \eqref{eq:numnorm};
Hele--Shaw inputs use coordinatewise standardization.
Hele--Shaw output codes and all Stokes codes are centered coordinatewise
and scaled blockwise.
For training codes $c^{(n)}\in\R^q$, define
\begin{equation}\label{eq:num-block-standardization}
\begin{aligned}
 \mu_j=\frac1{N_{\mathrm{tr}}}\sum_{n=1}^{N_{\mathrm{tr}}}c_j^{(n)},\
 \bar\mu=\frac1q\sum_{j=1}^q\mu_j,\
 s_{\mathrm{blk}}^2=\frac1{N_{\mathrm{tr}}q}
 \sum_{n=1}^{N_{\mathrm{tr}}}\sum_{j=1}^q(c_j^{(n)}-\bar\mu)^2,\
 \widetilde c_j=\frac{c_j-\mu_j}{s_{\mathrm{blk}}+10^{-8}}.
\end{aligned}
\end{equation}
Training statistics are block specific. The block scale uses the pooled
mean $\bar\mu$, whereas centering uses the coordinate means $\mu_j$.
The standard-deviation denominators in the code normalizations include the stabilizer $10^{-8}$.
We invert normalization before decoding.

For $N_q$ points $\xi_j$, let $h(\xi_j)$ and $\widehat h(\xi_j)$ be the reference and predicted values.
We define the value loss and weighted code loss
\begin{equation}\label{eq:numloss}
\begin{aligned}
 \mathcal L_{\mathrm{val}}
 &=\frac1{N_q}\sum_{j=1}^{N_q}
 |\widehat h(\xi_j)-h(\xi_j)|^2,\,
 \mathcal L_{\mathrm{code}}
 &=\frac{\sum_{\bi\in I_r}\omega_{\bi}^{\mathrm{out}}
 |\widehat{\widetilde c}_{\bi}-\widetilde c_{\bi}|^2}
 {\sum_{\bi\in I_r}\omega_{\bi}^{\mathrm{out}}},
\end{aligned}
\end{equation}
where $\omega_{\bi}^{\mathrm{out}}=(1+\alpha_{\mathrm{out}}\norm{\bi}_2^2)^{-1}$.
All models minimize minibatch mean losses using Adam with cosine
learning-rate decay:
\[
\begin{array}{l|cccc}
 \text{model}&\text{initial step size}&\text{batch size}&\text{updates}&\text{runs}\\\hline
 \text{Poisson MIONet}&10^{-3}&64&1.2\times10^5&5\\
 \text{GINO}&3\times10^{-4}&1&1.2\times10^5&5\\
 \text{Hele--Shaw}&10^{-3}&32&4\times10^4&5\\
 \text{Stokes}&10^{-3}&256&3\times10^4&5
\end{array}
\]
Evaluation uses the final trained models.

\subsection{Error metrics and aggregation}

The Poisson metric \eqref{eq:relL2} uses unweighted Euclidean norms
of solution values at all finite-element mesh vertices.
For nonempty finite point sets $P,Q$ and a point metric $\rho$, define the discrete Hausdorff distance
\begin{equation}\label{eq:num-discrete-hausdorff}
 H_\rho(P,Q)=\max\left\{
 \max_{p\in P}\min_{q\in Q}\rho(p,q),
 \max_{q\in Q}\min_{p\in P}\rho(p,q)\right\}.
\end{equation}
Points from all interface components are pooled without component matching.

For a reported test set, let $\mathcal I_s$ index its evaluated test samples in run $s$. Write $e_{s,i}$ for the error of sample or frame $i$ in that run.
We first average errors within each run, then report the mean and population
standard deviation of these averages across five runs:
\begin{equation}\label{eq:num-aggregation}
 E_s=\frac1{|\mathcal I_s|}\sum_{i\in\mathcal I_s}e_{s,i},
 \qquad
 \overline E=\frac15\sum_{s=0}^4E_s,
 \qquad
 \sigma_E=\left[\frac15\sum_{s=0}^4(E_s-\overline E)^2\right]^{1/2}.
\end{equation}

\section{Variable-domain Poisson problem}
\label{app:numerical-poisson}

\subsection{Domain and field sampling}
The sampled domains have the radial form in \eqref{eq:poissondomain}.
The dataset has $40000$ training domains ($19942$ notched, $20058$ smooth)
and $1000$ test domains ($489$ notched, $511$ smooth), all centered at
$\xi_0=(0.5,0.5)$. The unnotched radius is sampled at $1001$ uniformly spaced nodes
in $\tau=\theta/(2\pi)$ from a Gaussian field with mean $1$ and covariance
\[
 C_{\mathrm{rad}}(\tau,\tau')
 =(0.15)^2\exp\!\left[-\frac{2\sin^2(\pi(\tau-\tau'))}{(0.5)^2}\right].
\]
Discarding the repeated endpoint leaves $1000$ values, rescaled to maximum
absolute value $0.45$ and linearly interpolated to define $\varrho_0$.
A notch is added with probability $1/2$:
\[
\begin{aligned}
\varrho(\theta)&=\varrho_0(\theta)
-d\exp\!\left[-\left(\frac{\vartheta(\theta-\theta_0)}{w}\right)^2\right],\
d=\min\{\eta\bar\varrho_0,(\varrho_0(\theta_0)-0.045)_+\},\\
\theta_0&\sim\mathrm{Unif}[0,2\pi),\qquad
w\sim\mathrm{Unif}[0.035,0.05],\qquad
\eta\sim\mathrm{Unif}[0.45,0.72],
\end{aligned}
\]
where $\vartheta(\alpha)\in[-\pi,\pi)$ is the wrapped angle and $\bar\varrho_0$
is the mean of the rescaled radius samples. The boundary polygon uses $70$
uniformly spaced angles, supplemented by $45$ angles in
$\theta_0+[-3.5w,3.5w]$ for notched domains. Duplicate angles are removed.

Independent Gaussian fields $k,s,g$ are sampled on a $64\times64$ uniform
grid in $[0,1]^2$, with means $\mu_q$ and covariances
\[
 C_q(\xi,\xi')=\sigma_q^2
 \exp\!\left(-\frac{\norm{\xi-\xi'}_2^2}{2\ell_q^2}\right),
 \qquad q\in\{k,s,g\}.
\]
The sampling and rescaling parameters are
\begin{equation}\label{eq:poissongrf}
\begin{aligned}
(\mu_k,\sigma_k,\ell_k,L_k^{\mathrm{grid}})&=(1,0.2,0.2,4),\\
(\mu_s,\sigma_s,\ell_s,L_s^{\mathrm{grid}})&=(0,1,0.2,12),\\
(\mu_g,\sigma_g,\ell_g,L_g^{\mathrm{grid}})&=(0,0.02,0.5,0.6).
\end{aligned}
\end{equation}
Here $\sigma_q,\ell_q$ are the pre-rescaling standard deviation and length
scale. The radial and spatial Gaussian samplers use Cholesky factorization,
with diagonal jitter $10^{-10}$ added to the correlation matrix.
For each grid sample $\mathbf z_q$ with maximum
finite-difference gradient norm $G_q$, we set
\[
\mathbf z_q\leftarrow\mu_q\mathbf1+
\min\{1,L_q^{\mathrm{grid}}/G_q\}
(\mathbf z_q-\mu_q\mathbf1),\qquad
\mathbf z_k\leftarrow\max\{\mathbf z_k,0.1\},
\]
with a rescaling factor of $1$ when $G_q=0$. The resulting grid values
are interpolated using cubic splines.

\subsection{Reference solver}

Reference solutions use quadratic Lagrange elements on Gmsh meshes
with size bounds $0.0028$ and $0.030$ and local size $0.007$ near the notch.
The linear systems are solved by direct LU factorization.
The coefficient $k$ is interpolated in the same quadratic space.
Learning and evaluation use mesh-vertex solution and input-field values.

\subsection{Encodings and training}

The four representations in Eqs.~\eqref{eq:mfedgeom}--\eqref{eq:mfefield}
differ only in geometry and boundary-data encodings. Each code has $144$
coefficients, and all four representations use the same order-$12$ Legendre moments for $k$ and $s$.
IAE projects the SDF and extended boundary datum using a
$24\times24$ midpoint grid. Domain moments use piecewise-linear
interpolation of stored vertex values followed by three-point triangle quadrature;
boundary moments use linear edge interpolation and five-point
Gauss--Legendre quadrature.

The value-head MIONet in Eq.~\eqref{eq:ex1net} uses the branch and trunk networks
\[
\begin{aligned}
\mathcal B_c&=\operatorname{FNN}_{\mathrm{ReLU}}(144,256,256,256,128),
&&c\in\{\mathrm{geom},k,s,g\},\\
\mathcal T&=\operatorname{FNN}_{\mathrm{ReLU}}(2,256,256,256,128).
\end{aligned}
\]
Each representation uses the three normalizations in \eqref{eq:numnorm},
with $\alpha_{\mathrm{in}}=0.5$ for low-frequency rescaling, and minimizes
$\mathcal L_{\mathrm{val}}$ in \eqref{eq:numloss}. For each training domain, we uniformly
sample $300$ vertices from the full set of mesh vertices, including boundary vertices.
These vertices are fixed across updates and the five runs with seeds $0,\ldots,4$.

\subsection{GINO reference experiment}
For GINO~\cite{Li2023Geometry}, the input point set comprises boundary vertices and points of a $96\times96$ uniform
grid satisfying $\mathrm{sdf}<-0.006$.
The input channels are $[k,s,0,\mathrm{sdf}]$ at interior points and $[k,s,g,0]$
at boundary vertices; $k$ and $s$ are interpolated linearly from the mesh.
Input channels and solution values are standardized using statistics from training points.
The input and output graph operators use a neighborhood radius of $0.12$
and two hidden layers of width $64$. The FNO uses a $24\times24$ latent grid,
$16\times16$ Fourier modes, $48$ hidden channels, and four layers.
The model has $1.41$M parameters and uses weight decay $10^{-5}$.
Each update samples one geometry with its domain-dependent input and output point sets.
Evaluation uses all test mesh vertices and the relative $\ell^2$ metric
in \eqref{eq:relL2}.
The implementation uses the NeuralOperator library~\cite{Kossaifi2025Library}.

\section{Hele--Shaw flow}
\label{app:numerical-hs}

\subsection{Initial states and sampling}

We solve \eqref{eq:hs} on $\Omega=B_{0.45}(c_\ast)$, with
$c_\ast=(0.5,0.5)$ and $(M^-,M^+)=(1,2)$.
The initial inner phase and time-independent source are
\[
 \begin{aligned}
 \Omega^-(0)&=\bigcup_{\ell=1}^{J}B_{R_\ell}(c_\ell),\quad
 s(\xi)=\sum_{\ell=1}^{J}
 \frac{Q_\ell}{\pi r_w^2}\mathbf1_{B_{r_w}(c_\ell)}(\xi),\\
 r_w&=0.028,\quad Q_\ell\sim\mathrm{Unif}[0.035,0.065].
 \end{aligned}
\]
For single-drop proposals, we set $J=1$ and sample the radius
$R_1\sim\mathrm{Unif}[0.10,0.13]$ and center offset
$c_1-c_\ast\sim\mathrm{Unif}([-0.05,0.05]^2)$.
Two-drop proposals use
\[
 \begin{aligned}
 c_1&=c_\ast+(-d/2,\delta),
 &c_2&=c_\ast+(d/2,-\delta),\\
 R_1,R_2&\overset{\mathrm{iid}}{\sim}\mathrm{Unif}[0.085,0.11],
 &\delta&\sim\mathrm{Unif}[-0.04,0.04],
 \end{aligned}
\]
with $d\sim\mathrm{Unif}[0.24,0.30]$ for close pairs and
$d\sim\mathrm{Unif}[0.42,0.52]$ for far pairs.
The proposal probabilities for (single drop, close pair, far pair) are
$(0.20,0.60,0.20)$ for merge-augmented training and
$(0.30,0.45,0.25)$ for natural testing.
Within each configuration, the sampling distributions are identical for the two splits.
Retained trajectory counts are
\[
\begin{array}{c|ccc|c}
 &\text{single drop}&\text{merging pair}&\text{non-merging pair}&\text{total}\\\hline
 \text{training}&250&742&8&1000\\
 \text{test}&162&222&3&387
\end{array}
\]

\subsection{Reference solver}

The FEniCSx solver uses Gmsh meshes with size bounds $0.012$ and $0.024$,
a continuous quadratic pressure space $V_h$, and piecewise-constant mobility. At each time step, we solve the finite-element pressure equation corresponding to \eqref{eq:hs} for $p_h\in V_h\cap H_0^1(\Omega)$ by direct LU factorization.
We project $M_h\nabla p_h$ in $L^2$ onto continuous piecewise-linear
vector fields to obtain $\bm q_h$.
The polygon vertices $x^\ell$ advance by forward Euler:
\[
  x^{\ell+1}=x^\ell-\Delta t\,
  (\bm q_h(x^\ell)\cdot n^\ell)n^\ell,
  \qquad \Delta t=0.08,
\]
where $n^\ell$ is the unit normal at step $\ell$.
The interface is reconstructed from the union signed-distance field on a
$110\times110$ grid over $[0.05,0.95]^2$.
When the minimum gap between components is below $0.02$, a merger heuristic
adopts level-$0.02$ contours if this reduces the number of components.
Segments with fewer than $12$ vertices or a perimeter below $0.06$ are discarded;
retained reference contours are resampled at $130$ equally spaced arc-length points.
We store five frames at $t_k=0.56k$, $k=0,\ldots,4$.

\subsection{Encodings and learning}

The initial signed-distance field is encoded using cosine order $24$ on a
$48\times48$ midpoint grid. The source is encoded by order-$12$ tensor-Legendre
moments of its nodal finite-element interpolant, computed by triangle quadrature.
The branches and nonlinear head in \eqref{eq:ex2map}--\eqref{eq:ex2heads} are
\[
 \begin{aligned}
 \mathcal B_{\mathrm{geom}}&=\operatorname{FNN}_{\mathrm{ReLU}}(576,256,256,256,128),\\
 \mathcal B_{\mathrm{src}}&=\operatorname{FNN}_{\mathrm{ReLU}}(144,256,256,256,128),\\
 \mathcal H_{\mathrm{net}}&=\operatorname{FNN}_{\mathrm{ReLU}}(128,256,256,256,r_{\mathrm{out}}^2).
 \end{aligned}
\]
The four future times have separate embeddings in $\R^{128}$.
All configurations also use the auxiliary pressure head
\[
 \begin{aligned}
 \widehat p_{\mathrm{std}}(\xi,t_k)
 &=\inner{\bm a_k}{\mathcal T_p(\xi)}+\beta_p,\quad
 \mathcal T_p=\operatorname{FNN}_{\mathrm{ReLU}}(2,256,256,256,128).
 \end{aligned}
\]
The output orders and total parameter counts, including the $165249$
pressure-head parameters, are
\[
\begin{array}{c|ccc|c}
 \text{head}&\multicolumn{3}{c|}{\text{nonlinear}}&\text{linear}\\
 \hline
 r_{\mathrm{out}}&24&16&12&24\\\hline
 \text{parameters}&992193&909953&881169&753857
\end{array}
\]

For each training frame, we sample $300$ mesh vertices uniformly
and reuse them across updates and runs.
Pressure targets are standardized as $p_{\mathrm{std}}=(p-\mu_p)/(s_p+10^{-12})$,
where $\mu_p$ and $s_p$ are the mean and standard deviation
of the pressure values at these vertices, pooled across training frames.
We sample uniformly from the $4000$ future trajectory--time pairs and minimize
\begin{equation}\label{eq:hs-total-loss}
 \begin{aligned}
 \mathcal L =\mathcal L_{\mathrm{code}}+0.3\mathcal L_p,\
 \mathcal L_p =\frac1{300}\sum_{\nu=1}^{300}
 \bigl(\widehat p_{\mathrm{std}}(\xi_\nu,t_k)
       -p_{\mathrm{std}}(\xi_\nu,t_k)\bigr)^2,
 \end{aligned}
\end{equation}
with $\alpha_{\mathrm{out}}=0.02$ in \eqref{eq:numloss}.

\subsection{Contour extraction and evaluation}

The decoded approximation to the signed-distance field is evaluated on a
$130\times130$ grid over $[0.02,0.98]^2$.
Zero contours with fewer than $12$ vertices or a perimeter below $0.08$
are discarded. Each retained predicted contour is sampled at $130$
uniformly spaced vertex indices, whereas each reference contour retains its $130$
arc-length samples.
We use $H_\rho$ from \eqref{eq:num-discrete-hausdorff} with
$\rho(x,y)=\norm{x-y}_2$.

\section{Stokes--surfactant flow}
\label{app:numerical-stokes}

\subsection{Reference solver and correction diagnostics}

Reference trajectories use the level-set and immersed-interface scheme of
Xu et al.~\cite{Xu2006Level}, with interfacial jump conditions from
\cite[Eq.~(3.12)]{LeVeque1997Immersed} under the conventions of \eqref{eq:stokesbc}.
We use $\Omega=[-7,7]\times[-5,5]$, grid spacing $h=0.04$, and time step
$\Delta t=h/24$, with the physical parameters given in Section~\ref{sec:ex3}.
The outer pressure closure is $\partial_{n_{\partial\Omega}}p=0$; a collocated
grid and a zero-mean DCT-II pressure solve are used.
The Langmuir logarithm is evaluated as
$\ln\max\{1-\chi_{\mathrm{cov}}f,10^{-6}\}$.
Frames are stored at $t_k=k$, $k=0,\ldots,8$.

For trajectory filtering, let $\beta_{d,n}$ be the solver's multiplicative
surfactant-mass correction for drop $d$ at internal step $n$, and set
$b_n=\max_d|\beta_{d,n}-1|$. With $n_k$ the internal step at $t_k$, define
\[
 B_k^{\mathrm{seg}}=\max_{n_{k-1}<n\le n_k}b_n,\qquad
 B_k^{\mathrm{cum}}=\max_{n\le n_k}b_n.
\]
Both diagnostics are set to zero at $t_0$.

\subsection{Initial states and trajectory filtering}

Initial drops are ellipses
\[
 x(\theta)=c+R_\vartheta
 \begin{pmatrix}\sqrt q\cos\theta\\q^{-1/2}\sin\theta\end{pmatrix},
 \qquad q\sim\mathrm{Unif}[1,1.4],\quad
 \vartheta\sim\mathrm{Unif}[0,\pi),
\]
where $R_\vartheta$ is the planar rotation matrix. One- and two-drop proposal probabilities
are $0.2$ and $0.8$. Conditional on a two-drop proposal, the shearing-past and
co-translating regimes have probabilities $0.85$ and $0.15$, respectively.
An isolated center is uniform on $[-1.2,1.2]\times[-0.9,0.9]$.
Shearing-past proposals use
\[
\begin{gathered}
 d_y\sim\mathrm{Unif}[0.25,0.35],\quad
 t_c\sim\mathrm{Unif}[4.5,6.5],\quad x_0=\max\{d_yt_c,1.5\},\\
 c_1=(-x_0,\varepsilon d_y),\quad c_2=(x_0,-\varepsilon d_y),
 \qquad \Pr(\varepsilon=1)=0.8,\quad\Pr(\varepsilon=-1)=0.2.
\end{gathered}
\]
The approaching arrangement follows \cite[Fig.~3]{Xu2006Level}.
Co-translating proposals use $c_1=(-2.5+\eta_1,\zeta_1)$ and
$c_2=(2.5+\eta_2,\zeta_2)$, with independent
$\eta_j\sim\mathrm{Unif}[-0.25,0.25]$ and
$\zeta_j\sim\mathrm{Unif}[-0.15,0.15]$.
A proposed pair is retained only if its initial center separation is at least $2.5$.
The initial concentration, in polar angle $\theta$ about each center, is
\begin{equation}\label{eq:ex3initialf}
\begin{aligned}
 f_0(\theta)&=\operatorname{clip}_{[0.4,1.8]}
 \left(1+\sum_{m=1}^3a_m\cos(m\theta+\phi_m)\right),\\
 a_m&\sim\mathrm{Unif}[-0.3/m,0.3/m],
 \qquad \phi_m\sim\mathrm{Unif}[0,2\pi).
\end{aligned}
\end{equation}
The sampled amplitudes and phases are independent across modes and drops.

Trajectories are truncated at the first numerical overlap detected by a
zero-tolerance point-in-polygon test. Only trajectories with at least three valid
stored frames are retained. Mass-correction filtering discards the first frame with
$B_k^{\mathrm{seg}}>0.006$ and all later frames.
The threshold $0.006$ was selected from pooled training- and test-set
diagnostics and applied to both splits.
This filter removes $94$ training and $12$ test frames but no trajectories;
the maximum retained $B_k^{\mathrm{cum}}$ is $0.0057$.
The retained dataset has $850$ training and $150$ test trajectories, with regime
counts in Table~\ref{tab:ex3mode}.
All retained concentrations satisfy $\chi_{\mathrm{cov}}f<1$, as required by \eqref{eq:surftrans}.
Before encoding, we trim the last frame of trajectories with
more than three stored frames and $t_{\mathrm{last}}<8-1/2$.

\subsection{Encodings and learning}

Both code blocks use $N=96$, cosine order $r=32$, dimension $r^2=1024$,
and coordinate map
\[
T(x,y)=\left(\frac{x+7}{14},\frac{y+5}{10}\right).
\]
The two branches and two heads in \eqref{eq:ex3map} have architectures
\[
\begin{aligned}
 \mathcal B_c&=\operatorname{FNN}_{\mathrm{GELU}}(1024,256,256,256,128),\\
 \mathcal H_q&=\operatorname{FNN}_{\mathrm{GELU}}(128,256,256,256,1024).
\end{aligned}
\]
Normalization statistics are computed from training trajectory--time pairs.
We sample uniformly from the available future pairs and minimize the unmasked objective
\[
 \mathcal L=\frac1{1024}\left(
 \norm{\widehat{\widetilde z}_{\mathrm{geom}}-
 \widetilde z_{\mathrm{geom}}}_2^2+
 \norm{\widehat{\widetilde z}_f-\widetilde z_f}_2^2\right).
\]

\subsection{Contour extraction and error evaluation}

Predicted contours are extracted on a $240\times240$ grid over the physical box;
segments with more than eight vertices are retained.
Single-drop references use zero contours extracted from the native level-set grid;
two-drop references use $256$ approximately uniform arc-length samples per drop.
Predicted and reference contours are not resampled to a common point count. The surfactant cosine series is evaluated at predicted contour points.
Let $P,Q$ be the pooled predicted and reference interface point clouds,
respectively, and define
\[
 P_f=\{(x,\widehat f(x)):x\in P\},\qquad
 Q_f=\{(y,f(y)):y\in Q\}.
\]
Using the discrete Hausdorff distance defined in \eqref{eq:num-discrete-hausdorff}, we evaluate
\[
\begin{aligned}
 d_H&=H_{\rho_x}(P,Q),
 &\rho_x(x,y)&=\norm{T(x)-T(y)}_2,\\
 d_{\mathrm{gr}}&=H_{\rho_{\mathrm{gr}}}(P_f,Q_f),
 &\rho_{\mathrm{gr}}((x,s),(y,t))
 &=\max\{\rho_x(x,y),|s-t|\}.
\end{aligned}
\]
Regime frame counts are $(760,224,174)$ per run, in shearing-past,
isolated, and co-translating order.
Both distances are averaged over these $1158$ frames per run.
Figure~\ref{fig:ex3gallery} uses seed \texttt{s0}, a $260\times260$ decoding grid, and full-length trajectories $127$, $612$, and $705$. The shearing-past case has the smallest reference-loop gap; the other two were selected near the $60$th percentile of loop-shape anisotropy among full-length trajectories with matching predicted loop counts.